\newtheorem{theorem}{Theorem}
\newtheorem{corollary}{Corollary}
\newtheorem{lemma}{Lemma}
\newtheorem{proposition}{Proposition}
\newtheorem{definition}{Definition}
\newtheorem{assumption}{Assumption}
\theoremstyle{remark}
\newtheorem{remark}{Remark}
\newtheorem{example}{Example}
\def\F{\mathcal{F}}
\def\CE{\mathcal{E}}
\def\CL{\mathcal{L}}
\def\CM{\mathcal{M}}
\def\CQ{\mathcal{Q}}
\def\CR{\mathcal{R}}
\def\bR{\mathbb{R}}
\def\bN{\mathbb{N}}
\def\bx{\mathbf{x}}
\def\ES{\mathrm{ES}}
\begin{document}
\title{Data and uncertainty in extreme risks -- a nonlinear expectations approach}

\author{Samuel N. Cohen\\
Mathematical Institute, University of Oxford\\
samuel.cohen@maths.ox.ac.uk}

\maketitle
\begin{abstract}
Estimation of tail quantities, such as expected shortfall or Value at Risk, is a difficult problem. We show how the theory of nonlinear expectations, in particular the Data-robust expectation introduced in \cite{Cohen2016}, can assist in the quantification of statistical uncertainty for these problems. However, when we are in a heavy-tailed context (in particular when our data are described by a Pareto distribution, as is common in much of extreme value theory), the theory of \cite{Cohen2016} is insufficient, and requires an additional regularization step which we introduce. By asking whether this regularization is possible, we obtain a qualitative requirement for reliable estimation of tail quantities and risk measures, in a Pareto setting.
\end{abstract}



\section{Introduction}
Statistics is, in part, the art of combining data and probabilistic models to predict the unknown. This necessarily results in uncertainty, not only as described by the estimated probabilistic model\footnote{Since the work of Knight \cite{Knight1921}, uncertainty that is described by a probabilistic model is sometimes called `risk', while lack of knowledge of the model is called `uncertainty'.}, but also in the correctness of the model itself. In many situations, it is important to describe this uncertainty, and to incorporate it in our decision making.

One area where these issues are particularly prominent is when considering extreme outcomes. By their very nature, this involves looking at unlikely events, often beyond the range of available data. Justifying this extrapolation from observations is a key purpose of `extreme value theory'. In particular, the Fisher--Tippett--Gnedenko and Pickands--Balkema--de Haan theorems tell us that, assuming they exist, the possible distributions of renormalized extremes are  in a small class, allowing asymptotic approximation methods to be used. Nevertheless, one must still estimate parameters of these distributions, and intuition suggests that there remains a significant level of uncertainty when working with these models.

The question of how to incorporate uncertainty in decision making has a long history. Bayesian approaches to statistics do this by placing a distribution over the unknown quantities, conditioning on observations, and then integrating over the possible unknowns. This effectively forces all uncertainty to be treated `linearly', and one cannot describe an agent's aversion to uncertainty separately to their aversion to risk (i.e. to the randomness which is included within the model). Conversely, classical methods in statistics allow for uncertainty in parameters (for example, through the use of confidence sets), but these are frequently used with relatively limited axiomatic decision-theoretic support.

One approach, building from an axiomatic approach to valuations of outcomes, is `nonlinear expectations'. These are mathematical generalizations of the classical expected value, allowing for risk and uncertainty to be represented in a rigorous and precise manner. Nonlinear expectations can be seen as generalizations of worst-case valuation, considered in a statistical context by Wald \cite{Wald1945}, but with an increased focus on making valuations of \emph{multiple} random outcomes, rather than on minimizing a particular fixed risk. In finance, nonlinear expectations have played a particular role in the theory of convex risk measures, see for example F\"ollmer and Schied \cite{Follmer2002}. In a recent paper \cite{Cohen2016}, a method of connecting these nonlinear expectations with statistical estimation was introduced. 

In this paper, we shall look at some of the implications that this way of thinking has on extreme value theory. In particular, we will consider how quantifying our uncertainty leads to scepticism in extreme extrapolation, and how different approaches to measuring the riskiness of a decision require different quantities of data.

This paper proceeds as follows. Following an initial summary of the theory of nonlinear expectations and their connection with estimation, we will consider how to combine these expectations with measures of risk, corresponding in some cases to extrapolation into the tail. We will then look at methods of regularizing these nonlinear expectations, which are often needed when considering unbounded outcomes. Finally, we will apply this theory to a classic estimation problem with heavy tails, and draw conclusions for the estimation of a variety of measures of risk.

\begin{remark}
 In their well known textbook on extreme value theory in finance and insurance, Embrechts, Kl\"uppelberg and Mikosch \cite{Embrechts1997} consider the challenges of extrapolation. In particular, they discuss their preference for looking at only moderately high quantiles of distributions, rather than estimating deeper into the tail. They state,
\begin{displayquote}
 The reason why we are \emph{very reluctant} to produce plots for high quantiles like 0.9999 or more, is that we feel that such estimates are to be treated \emph{with extreme care}. [...] The statistical reliability of these estimates becomes, as we have seen, very difficult to judge in general. Though we can work out approximate confidence intervals for these estimators, such constructions strongly rely on mathematical assumptions which are unverifiable in practice. 

\hfill --- Embrechts, Kl\"uppelberg and Mikosch, \cite[p.364]{Embrechts1997}

\hfill (emphasis in original)
\end{displayquote}

In this paper, we give a partial attempt at a formal analysis supporting this reticence. We shall see that the theory of nonlinear expectations gives certain bounds (which depend on the amount of data at hand) beyond which the uncertainties from estimation dominate the computation of high quantiles (and similar quantities), hence statistical inference is problematic.
\end{remark}

\section{DR-expectations}

Consider the following problem. Suppose we have a family $\{X\}\cup\{X_i\}_{i\in \bN}$ of real-valued random variables on the canonical space $(\Omega, \F) = (\bR^\bN, \mathcal{B}(\bR^\bN))$, where $ \mathcal{B}(\bR^\bN)$ denotes the Borel cylinder $\sigma$-algebra.  We observe $\bx_N=\{X_n\}_{n=1}^N$, and seek to draw conclusions about the likely values of $\phi(X)$, where $\phi$ is some (known, Borel measurable) real function. Our aim, therefore, is to estimate the distribution of $X$, accounting for the uncertainty in our estimation.

 Beginning classically, we first propose a family $\mathcal{Q}$ of measures, assumed equivalent on $\sigma(\bx_N)$ for each $N<\infty$, corresponding to possible joint distributions of $\{X\}\cup\{X_n\}_{n\in \bN}$.
  For each of these distributions,  we obtain the log-likelihood
\[\ell(\bx_N;Q) = \log\Big(\frac{dQ|_{\sigma(\bx_N)}}{dQ_{\mathrm{ref}}|_{\sigma(\bx_N)}}\Big)(\bx_N)\] 
(relative to some reference measure $Q_{\mathrm{ref}}$ on $\Omega$). We can then define the divergence, or negative log-likelihood ratio,
\[\alpha_{\CQ|\bx_N}(Q) = -\ell(\bx_N;Q)+\sup_{Q\in\CQ}\ell(\bx_N;Q).\]

If we knew the `true' measure $Q$, a natural estimate for $\phi(X)$ given our observations $\bx_N$ would be the conditional expectation $E_Q[\phi(X)|\bx_N]$. Formally, as this is only defined for almost every observation $\bx_N$, and we have a possibly uncountable family of measures $\CQ$, this may cause technical difficulties. As we are working with countably generated Borel spaces\footnote{See, for example, Cohen and Elliott \cite[Section 2.6]{SCA} or Shiryaev \cite[Section II.7]{Shiryaev2000} for further details on the mathematics of this construction.}, simultaneously for every $Q\in \mathcal{Q}$, we can define the regular conditional $Q$-probability distribution of $X$ given $\bx_N$, that is, a probability kernel $\mathbf{Q}: \bR^N\times \mathcal{B}(\bR) \to [0,1]$ such that 
\[\mathbf{Q}(\bx_N(\omega), A) = Q(X\in A| \bx_N)(\omega),\]
and so obtain a `regular' conditional expectation
\[E_Q[\phi(X)|\bx_N] := \int_\bR \phi(x) \mathbf{Q}(\bx_N, dx).\]
We use this regular conditional expectation in what follows.

\begin{remark}
 We shall focus our attention on the simpler case where $\{X\}\cup\{X_n\}_{n \in \bN}$ are iid under each $Q\in\CQ$. In this case, each measure in $\mathcal{Q}$ can be described by the density $f_Q$ of $X$ under $Q$, and we obtain the simplifications $\ell(\bx_N;Q) = \sum_{i=1}^N \log f_Q(X_i)$ and $E[\phi(X)|\bx_N] = E[\phi(X)]$.
\end{remark}

The key idea of \cite{Cohen2016} is to define the following operator.
\begin{definition}
 For fixed constants $k>0$, $\gamma\in [1, \infty]$, define the nonlinear expectation
\[\CE^{k,\gamma}_{\CQ|\bx_N}(\phi(X)) = \sup_{Q\in\CQ}\Big\{ E[\phi(X)|\bx_N] - \Big(\frac{1}{k}\alpha_{\CQ|\bx_N}(Q)\Big)^\gamma\Big\},\]
where we write $|x|^\infty = 0$ for $|x|\leq 1$, and $\infty$ otherwise. We call this the DR-expectation (with parameters $k,\gamma$). `DR' is a deliberately ambiguous acronym, and refers to either `divergence robust' or `data-driven robust'.
\end{definition}

We can see that $\CE^{k,\gamma}_{\CQ|\bx_N}$ is built from considering the expected values under a range of models $\CQ$, penalized by how well each model fits the observed data $\bx_N$. This allows the DR-expectation to encode the uncertainty inherent in statistical estimation.

This nonlinear expectation has many elegant properties. From the perspective of the theory of nonlinear expectations, it is monotone, convex, constant equivariant and assigns all constants their values. One can obtain a risk measure, in the sense of F\"ollmer and Schied \cite{Follmer2002}, Frittelli and Rosazza-Gianin \cite{Frittelli2002} and Artzner, Delbaen, Eber and Heath \cite{Artzner1999}, by specifying $\rho(\phi(X)) = \CE^{k,\gamma}_{\CQ|\bx_N}(-\phi(X))$, or by treating $\phi(X)$ as `losses'. The nonlinear expectation is coherent (in particular, it is positively homogeneous) if $\gamma=\infty$. 

At the same time, unlike most risk measures in the literature, the DR-expectation is directly built from the observed data to represent statistical uncertainty. In order to perform classical statistical estimation, an agent needs to specify a class of models $\CQ$ to be considered (and in a Bayesian analysis, a prior distribution over models); the only remaining inputs to the problem are the observations $\bx_N$. In the DR-expectation framework, one additionally needs the uncertainty aversion parameter $k$ and the curvature parameter $\gamma$. Rather than separating statistical inference from the valuation of outcomes, the DR-expectation combines the two, as the likelihood is part of the definition of the expectation. Observations $\bx_N$ do not simply affect our expectation through conditioning (as in a Bayesian analysis), they also  determine which models $Q\in \CQ$ are `good', in the sense of fitting our past observations well.

\begin{remark}
A key advantage of this approach is that we define the DR-expectation for every function $\phi$. This allows the comparison of different $\phi$ to be consistently carried out, and the impact of statistical uncertainty on different $\phi$ may vary. While we have assumed above that $X$ is real-valued, there is no issue if $X$ is taken to be $\bR^d$-valued (or more generally, Borel measurable and valued in some separable Banach space), which allows a great deal of flexibility in modelling. 
\end{remark}

In the setting where $\{X\}\cup\{X_n\}_{n\in \bN}$ are iid, and under some regularity assumptions on $\CQ$ detailed in \cite{Cohen2016}, one can show that the DR-expectation is a consistent estimator of the expected value of $\phi(X)$, that is, 
\[\CE^{k,\gamma}_{\CQ|\bx_N}(\phi(X)) \to_P E_P[\phi(X)]\quad \text{as }N\to\infty, \text{ for every }P\in\CQ,\] 
 for any value of $k>0$ and $\gamma\in [1,\infty]$. It is also clear that the case $\gamma=\infty$ is closely related to the Neyman--Pearson lemma from hypothesis testing, as it considers models where the negative log-likelihood-ratio $\alpha_{\CQ|\bx_N}(Q)$ is sufficiently small. 

In order to give precise asymptotic statements, the following definition will prove useful. Here $P^*$ is the outer measure associated with $P$, to deal with any potential lack of measurability.
\begin{definition}
Consider sequences $f=\{f_n\}_{n\in \bN}$ and $g=\{g_n\}_{n\in \bN}$ of functions $\Omega \to \bR$.
\begin{enumerate}[(i)]
 \item We  write $f=O_P(g)$ whenever  $f_n/g_n$ is stochastically bounded, that is,  $P^*(|f_n/g_n|>M)\to 0$ as $M\to \infty$ for each $n$,
 \item We write $f=o_P(g)$ whenever $\lim_{n\to\infty}P^*(|f_n/g_n|>\epsilon)=0$ for all $\epsilon>0$.
\end{enumerate}
Note that this depends on the choice of measure $P$. 
\end{definition}

\begin{remark}
In \cite{Cohen2016}, a range of large-sample asymptotic results were obtained, for iid observations and $X$, under varying assumptions on the family $\CQ$. Suppose that $\CQ$ is a subset of a `nice' exponential family parametrized by $\theta\in \Theta$  (in particular, Assumption \ref{assn:expfam}, as detailed below, holds)  and the maximum likelihood estimator based on a sample of size $N$, denoted $\hat\theta_N$, is well defined. For bounded $\phi$, the following large sample approximations were obtained:
\begin{align*}
 \CE^{k,1}_{\CQ|\bx_N}(\phi(X)) &= E_{\hat \theta_N}[\phi(X)] + \frac{k}{2N} V(\phi, \hat\theta_N) + O_P(N^{-3/2}),\\
\CE^{k,\infty}_{\CQ|\bx_N}(\phi(X)) &= E_{\hat \theta_N}[\phi(X)] + \sqrt{\frac{2k}{N} V(\phi, \hat \theta_N)} + O_P(N^{-3/4}),
\end{align*}
for every $P\in \mathcal{Q}$, where $V(\phi, \hat \theta_N)$ is the `local variance' at the MLE, defined by 
\[V(\phi, \hat\theta) := \Big(\frac{\partial E_\theta[\phi(X)]}{\partial \theta}\Big|_{\hat \theta}\Big)^\top (\mathfrak{I}_{\hat\theta}^{-1})\Big(\frac{\partial E_\theta[\phi(X)]}{\partial \theta}\Big|_{\hat \theta}\Big),\]
 for $\mathfrak{I}_{\hat\theta_N}$ the observed information matrix (the Hessian of the negative log-likelihood) at $\hat\theta_N$.
\end{remark}

\begin{remark}
 The results in \cite{Cohen2016} focus on the case where $\{X\}\cup\{X_n\}_{n\in \bN}$ are iid under every model in $\CQ$. This is clearly restrictive from a modelling perspective. However, it is clear from the definition that the DR-expectation depends on the set $\CQ$  only through the conditional expectation and the log-likelihood, evaluated at the observed data $\bx_N$. Therefore, provided our models are `close' to an iid setting, in the sense that our expectations and likelihoods are similar to those from an iid model (possibly after some transformation of the space), the asymptotic behaviour of the DR-expectation should not be significantly affected.
\end{remark}

\begin{remark}
 The DR-expectation is particularly designed to highlight statistical uncertainty, rather than an individual's risk aversion. In particular, the above results (particularly when combined with the classic analysis of White \cite{White1982}) show that as the sample size $N\to\infty$, in the iid case,
\[\CE^{k,\gamma}_{\CQ|\bx_N}(\phi(X))\approx -\CE^{k,\gamma}_{\CQ|\bx_N}(-\phi(X)) \approx E_P[\phi(X)],\]
where $P$ is the measure in $\CQ$ which is closest to the empirical distribution of the data. In this way, an individual's preferences can be seen not to play any role in the DR-expectation when samples are large (that is, when statistical uncertainty is low).
\end{remark}

\subsection{Data-robust risk measures}
To focus attention on extreme events, or to include an individual's risk aversion, we may wish to extend our focus from the DR-expectation. Classically, when we ignore statistical uncertainty (so the DR-expectation would be replaced by the expectation with respect to the known distribution), a standard approach would be to take $X$ to denote losses and $\phi$ to be a utility function. In our setting, following this approach would lead to a version of expected utility theory where the expectation is replaced with the DR-expectation.

Further interesting cases can be obtained by directly combining the DR-expectation with a risk assessment depending directly on the law of the random outcome. As we shall see, many common risk assessments are of this type, for example the value at risk and expected shortfall, as well as classical statistics such as the expectation and variance. Writing $\CM_1$ for the space of probability measures on $\bR$, we consider a map $\CR:\CM_1\to \bR$ which represents the `riskiness' of a gamble with specified distribution. We can then combine the risk aversion of $\CR$ (which treats the law of $X$ as fixed) with the uncertainty aversion of the DR-expectation (which considers our uncertainty in this law). We formalize this construction in the following definition.

\begin{definition}
Write $\CM_1$ for the space of probability measures on $\bR$ and $\CL_Q(\xi|\bx)$ for the (regular conditional) law of a random variable $\xi$ under the measure $Q$ (given observations $\bx$). Let $\CR:\CM_1\to \bR$ be a map with $\CR(\CL_Q(\xi|\bx))$ representing the risk of our position $\xi$ assuming $Q\in \CQ$ is a `true' model. We combine $\CE^{k,\gamma}_{\CQ|\bx}$ and $\CR$ through the definition
\begin{align*}
(\CE^{k,\gamma}_{\CQ|\bx}\circ \CR)(\xi) & := \sup_{Q\in\CQ}\Big\{\CR(\CL_Q(\xi|\bx))-\Big(\frac{1}{k}\alpha_{\CQ|\bx}(Q)\Big)^\gamma\Big\}
\end{align*}
\end{definition}
\begin{remark}
 For the sake of simplicity, we will identify this construction by adding a prefix `DR-'. For example, combining the DR-expectation and the expected shortfall, as in the example below, we obtain the `DR-expected shortfall', similarly the `DR-Value at Risk'.
\end{remark}

\begin{example}\label{ex:ESdefn}
Consider the risk assessment given by the (upper) expected shortfall at level $\epsilon$ (also called the conditional value at risk, tail value at risk or average value at risk, by various authors). For a model $P$ such that $\xi$ has a continuous distribution, this can be written
\[\CR(\CL_P(\xi)) = E_P[\xi|\xi\geq F^{-1}_P(1-\epsilon)]=\ES_\epsilon(\xi)\]
where $F_P$ is the cdf of $X$ under the measure $P$. This is a coherent risk measure (or nonlinear expectation), and can also be written (see F\"ollmer and Schied \cite{Follmer2002} or further discussion in McNeil, Frey and Embrechts \cite{McNeil2005})
\[\ES_\epsilon(\xi) = \sup_{P'\sim P}\Big\{ E_{P'}[\xi] -\tilde\alpha(P';P)\Big\}\]
where 
\[\tilde\alpha(P';P) = \begin{cases} 0 & \text{if }\|dP'/dP\|_\infty <\epsilon^{-1},\\ \infty & \text{otherwise.}\end{cases}\]
\end{example}

In the special case where $\CR$ corresponds to a convex expectation (based on a reference measure), this construction can simplify in a natural way. 

\begin{proposition}\label{expconvolution}
Suppose that, for each measure $P\in\CQ$, our risk assessment $\CR$ has representation
\[\CR(\CL_P(\xi|\bx))= \sup_{P'\in \tilde\CQ}\{E_{P'}[\xi|\bx]-\tilde\alpha(P';P)\},\]
where $\tilde\CQ$ is a collection of probability measures on $\Omega$ with $\CQ\subseteq\tilde\CQ$ and $\tilde\alpha:\tilde\CQ\times\tilde\CQ\to \bR$ is an arbitrary penalty function. The composition of $\CE^{k,\gamma}_{\CQ|\bx}$ and $\CR$ is a nonlinear expectation, and has representation
\begin{align*}
(\CE^{k,\gamma}_{\CQ|\bx}\circ \CR)(\xi)  = \sup_{P'\in \tilde\CQ}\big\{E_{P'}[\xi|\bx]-\alpha^*(P')\big\}
\end{align*}
where $\alpha^*$ is the inf-sum (cf. the inf-convolution in Barrieu and El Karoui \cite{Barrieu2005})
\[\alpha^*(P') = \inf_{Q\in\CQ}\Big\{\Big(\frac{1}{k}\alpha_{\CQ|\bx}(Q)\Big)^\gamma+\tilde\alpha(P';Q)\Big\}.\]
\end{proposition}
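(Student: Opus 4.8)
The plan is to prove the identity by direct substitution followed by an interchange of suprema, and then to verify separately the normalization that makes the composition a genuine nonlinear expectation. I would begin from the definition of the composition,
\[(\CE^{k,\gamma}_{\CQ|\bx}\circ \CR)(\xi) = \sup_{Q\in\CQ}\Big\{\CR(\CL_Q(\xi|\bx))-\Big(\frac{1}{k}\alpha_{\CQ|\bx}(Q)\Big)^\gamma\Big\},\]
and substitute the assumed dual representation $\CR(\CL_Q(\xi|\bx)) = \sup_{P'\in\tilde\CQ}\{E_{P'}[\xi|\bx] - \tilde\alpha(P';Q)\}$ (applied with $P=Q$). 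Since the penalty $(\frac{1}{k}\alpha_{\CQ|\bx}(Q))^\gamma$ depends only on $Q$ and not on $P'$, it can be absorbed into the inner supremum over $P'$, producing a single supremum over the product set $\CQ\times\tilde\CQ$.

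The key step is then to interchange the two suprema, which is valid with no regularity hypotheses since for any family indexed by a product set the iterated supremum is independent of order, and to factor out $E_{P'}[\xi|\bx]$, which does not depend on $Q$. This leaves
\[\sup_{P'\in\tilde\CQ}\Big\{E_{P'}[\xi|\bx] + \sup_{Q\in\CQ}\Big[-\tilde\alpha(P';Q)-\Big(\frac{1}{k}\alpha_{\CQ|\bx}(Q)\Big)^\gamma\Big]\Big\}.\]
Rewriting the inner supremum as $-\inf_{Q\in\CQ}\{\cdots\}$ produces exactly the inf-sum $\alpha^*(P')$, giving the claimed representation. The convention $|x|^\infty=0$ for $|x|\le 1$ and $\infty$ otherwise causes no difficulty: it merely renders the $Q$-penalty a $\{0,\infty\}$-valued indicator inside the infimum defining $\alpha^*$.

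To confirm that the result is a nonlinear expectation I would read the properties off the representation $\sup_{P'\in\tilde\CQ}\{E_{P'}[\xi|\bx]-\alpha^*(P')\}$ directly: it is a supremum of maps that are affine and monotone in $\xi$, hence convex and monotone, while constant equivariance $(\CE^{k,\gamma}_{\CQ|\bx}\circ\CR)(\xi+c)=(\CE^{k,\gamma}_{\CQ|\bx}\circ\CR)(\xi)+c$ holds because each $P'$ is a probability measure. The only point needing a short argument is the normalization $(\CE^{k,\gamma}_{\CQ|\bx}\circ\CR)(c)=c$, which holds precisely when $\alpha^*\ge 0$ and $\inf_{P'\in\tilde\CQ}\alpha^*(P')=0$. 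I would obtain this by interchanging the infima, $\inf_{P'}\alpha^*(P') = \inf_{Q\in\CQ}\{(\frac{1}{k}\alpha_{\CQ|\bx}(Q))^\gamma + \inf_{P'\in\tilde\CQ}\tilde\alpha(P';Q)\}$, using that $\CR$ is itself a (normalized) convex expectation so that $\tilde\alpha\ge 0$ with $\inf_{P'}\tilde\alpha(P';Q)=0$, together with $\inf_{Q\in\CQ}\alpha_{\CQ|\bx}(Q)=0$, which is immediate from the definition of $\alpha_{\CQ|\bx}$ as a shifted negative log-likelihood.

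I expect no substantive obstacle here: the argument is the inf-convolution bookkeeping familiar from convex duality (cf. Barrieu and El Karoui \cite{Barrieu2005}), and the only place requiring genuine care is the normalization step, where one must check that both penalty functions vanish at their respective minimizers so that constants are priced correctly. Measurability of all the conditional expectations involved is already guaranteed by the regular-conditional-expectation construction set up earlier, so it introduces no additional difficulty.
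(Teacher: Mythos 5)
Your proposal is correct and follows essentially the same route as the paper's proof: expand the definition, merge the two suprema over $\CQ\times\tilde\CQ$, factor out $E_{P'}[\xi|\bx]$, and recognize the remaining inner optimization as the inf-sum $\alpha^*$. The only difference is that you verify the normalization $(\CE^{k,\gamma}_{\CQ|\bx}\circ\CR)(c)=c$ explicitly (which requires the mild extra assumptions $\tilde\alpha\geq 0$ and $\inf_{P'}\tilde\alpha(P';Q)=0$ that you correctly identify), whereas the paper simply appeals to duality as in F\"ollmer and Schied \cite{Follmer2002}.
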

\begin{proof}
To obtain the representation, simply expand
\begin{align*}
(\CE^{k,\gamma}_{\CQ|\bx}\circ \CR)(\xi) & = \sup_{Q\in\CQ}\Big\{\CR(\CL_Q(\xi|\bx))-\Big(\frac{1}{k}\alpha_{\CQ|\bx}(Q)\Big)^\gamma\Big\}\\
& = \sup_{Q\in\CQ, P'\in \tilde\CQ}\Big\{E_{P'}[\xi|\bx]-\Big(\frac{1}{k}\alpha_{\CQ|\bx}(Q)\Big)^\gamma-\tilde\alpha(P';Q)\Big\}\\
& = \sup_{P'\in \tilde\CQ}\big\{E_{P'}[\xi|\bx]-\alpha^*(P')\big\}
\end{align*}
This defines a nonlinear expectation by duality, as in F\"ollmer and Schied \cite{Follmer2002}.
\end{proof}

\section{Regularization from data}

 The above approach, based on simply penalizing with the divergence, often fails to give bounded values when considering unbounded random variables. In some sense, this is because of the difficulty in using data to determine the probabilities of extreme outcomes. To motivate our discussion, we consider the following simple example.

\begin{example}
 Consider the case where $\xi=\phi(X)=\beta X$, and $\{X\}\cup\{X_n\}_{n\in\bN}$ are iid $N(\mu,\sigma^2)$ distributed random variables, with $\mu\in\bR$ and $\sigma^2>0$  unknown. Writing $\bar X = N^{-1}\sum_{i=1}^N X_i$ and $\hat\sigma^2 = N^{-1}\sum_i (X_i-\bar X)^2$ for the MLE estimates of $\mu$ and $\sigma^2$, it is straightforward to calculate (see \cite{Cohen2016} for details)
\begin{equation}\label{normDRdecomp}
 \begin{split}
\CE_{\mathcal{Q}|\mathbf{x}_N}^{k,1}(\beta X)&=\sup_{\mu,\sigma^2}\Big\{\beta\mu - \frac{N}{2k}\Big(\log(\sigma^2/\hat\sigma^2)+\frac{\frac{1}{N}\sum_{n=1}^N (X_n-\mu)^2}{\sigma^2}-1\Big)\Big\}\\
&=\sup_{\mu,\sigma^2}\Big\{\beta\mu -\frac{N}{2k\sigma^2}(\bar X-\mu)^2 -\frac{N}{2k}\Big(\log(\sigma^2/\hat\sigma^2)+\frac{\hat\sigma^2}{\sigma^2}-1\Big)\Big\}\\
&= \beta \bar X +\sup_{\sigma^2} \Big\{ \frac{\beta^2 k}{2N}\sigma^2 - \frac{N}{2k}\Big(\log(\sigma^2/\hat\sigma^2)-1+\frac{\hat\sigma^2}{\sigma^2}\Big)\Big\}.
\end{split}
\end{equation}
This is problematic, as for any finite $N$, the growth of $\sigma^2$ will dominate the term in braces as $\sigma^2\to\infty$, which implies $\CE_{\mathcal{Q}|\mathbf{x}_N}^{k,1}(\beta X)=\infty$. 

However, the function to be optimized is increasing near zero, and we can find a local minimum value of the derivative at $\sigma^2=2\hat\sigma^2$, where
\[\frac{\partial}{\partial \sigma^2} \bigg(\frac{\beta^2 k}{2N}\sigma^2 - \frac{N}{2k}\Big(\log(\sigma^2/\hat\sigma^2)-1+\frac{\hat\sigma^2}{\sigma^2}\Big)\bigg)\bigg|_{\sigma^2=2\hat\sigma^2} = \frac{\beta^2 k}{2N} - \frac{N}{2k}\frac{1}{4\hat\sigma^2}.\]
Provided this quantity is negative, that is, $\hat\sigma^2 > (\beta k/2N)^2$, we know that the derivative has changed sign at least once on the interval $\sigma^2\in (0, 2\hat\sigma^2)$, so we can be sure that the function will have a local maximum, and (we shall see that) this occurs near $\sigma^2\approx \hat\sigma^2$.

This example leads us to consider further ways of restricting our attention, to ensure that it is a \emph{local} maximum which is chosen in the DR-expectation, as this corresponds to a value close to the MLE, and avoids the explosion of the expectation caused by considering the implausible model $\sigma^2\to \infty$.
\end{example}

The following are some natural approaches to preventing this explosion:
\begin{itemize}
 \item We could place a prior probability on $\sigma^2$, with density $f$, which collapses sufficiently quickly when $\sigma^2\to \infty$, and then incorporate this prior into the penalty. This would result in a penalty given by the negative log-posterior density,
\[\alpha_{\CQ|\bx_N}(Q) = -\ell(\bx_N; Q) - \log f(\sigma^2_Q) + \text{normalization term}\]
where the normalizing term is constant with respect to $Q$, and ensures $\inf_{Q\in\CQ} \alpha_{\CQ|\bx_N}(Q)=0$. In order to ensure $\CE_{\mathcal{Q}|\mathbf{x}_N}^{k,1}(\beta X)<\infty$ for large $N$, we require $f(\sigma^2)e^{a\sigma^2}\to 0$ as $\sigma^2\to \infty$, for some $a>0$, which is not the case for the classical (inverse Gamma) conjugate prior. This also seems a rather \emph{ad hoc} fix, and raises the concern that the conclusions drawn, no matter how much data is available, depends principally on the choice of prior distribution $f(\sigma^2)$. 
 \item We could \emph{a priori} truncate $\sigma^2$ away from $\infty$ (i.e. set a maximum value $\bar{\sigma}^2$ which will be considered). This is equivalent to a uniform prior $f(\cdot)\propto I_{[0,\bar{\sigma}^2]}$ for $\sigma^2$, and will result in the local maximum being chosen whenever $N$ is sufficiently large. However this has the drawback that it requires us to posit a maximum value of $\sigma^2$ independently of the data, and again our conclusions will depend on the choice of this upper bound, for all finite values of $N$.
 \item We could restrict to the case $\gamma=\infty$, and thus only consider a likelihood interval of values for $\sigma^2$. This has the implication that our DR-expectation is then positively homogeneous, and we `flatten out' the interaction between the choice of model and the variable being considered. (In particular, the penalty term only takes the values $0$ and $\infty$ -- models are either excluded or are considered as reasonable as the best-fitting model, with no middle ground.)
\end{itemize}

The final of these options has the particular advantage that the conclusions are purely based on the data, rather than our prior assumptions. At the same time, the flattening of our expectation may be undesirable, as it makes it difficult to see how different models will be chosen when evaluating different random variables. For this reason, we propose an alternative regularization technique, combining the $\gamma<\infty$ and $\gamma=\infty$ cases. To achieve this, we define the following transformation of the divergence.

\begin{definition}
 The $\delta$-truncation of the divergence, based on a sample of size $N$, is given by 
\[\alpha^{(\delta)}_{\CQ|\bx_N}(Q) = \begin{cases} \alpha_{\CQ|\bx_N}(Q) & \text{if }\alpha_{\CQ|\bx_N}(Q) \leq N^\delta,\\
                                     \infty & \text{otherwise}.
                                    \end{cases}
\]
The corresponding DR-risk-assessment is defined by 
\begin{equation}\label{eq:truncoptim}
 (\CE^{k,\gamma,\delta}_{\CQ|\bx_N}\circ\CR)(\xi) = \sup_{Q\in\CQ}\Big\{\CR(\CL_Q(\xi)|\bx_N) - \Big(\frac{1}{k}\alpha^{(\delta)}_{\CQ|\bx_N}(Q)\Big)^\gamma\Big\}.
\end{equation}
\end{definition}

\begin{remark}
 For bounded random variables $\xi$, taking $\CR(\CL_Q(\xi|\bx))=E_Q[\xi|\bx_N]$, the optimization step in calculating the DR-expectation implies that only those measures in $\{Q:\alpha_{\CQ|\bx_N}(Q) < \|\xi\|_{\infty}\}$ need be considered. Therefore, provided $N^\delta>\|\xi\|_\infty$, we have $\CE^{k,\gamma,\delta}_{\CQ|\bx_N}(\xi) = \CE^{k,\gamma}_{\CQ|\bx_N}(\xi)$.
\end{remark}

\begin{remark}
 Suppose $\mathcal{Q}$ is sufficiently regular and $N$ sufficiently large that Wilks' theorem holds\footnote{This classical theorem states that $\alpha_{\CQ|\bx_N}(P)$ has an asymptotic $\chi^2_d$ distribution under $P$, where $d$ is the dimension of a parameter space representing $\CQ$. Conditions under which this holds (which can be interpreted as requirements on the family $\CQ$) can be found in Lehmann and Casella \cite{Lehmann1998}.}. Then the set of measures $\{Q:\alpha_{\CQ|\bx_N}(Q)<N^\delta\}$ corresponds to a confidence set with confidence level $F^{-1}_{\chi^2_d}(2N^\delta)$, for $d$ the dimension of (a parametrization of) $\mathcal{Q}$, where $F^{-1}_{\chi^2_d}$ is the inverse cdf of the $\chi^2_d$ distribution. In this case, we observe that the $\delta$ truncation corresponds to considering only those measures in a confidence set around the MLE (and then penalizing further using their divergences based on the data). However, the confidence level of this set will grow quickly with $N$ (when $d=1$, already for $N^\delta \approx 2$ we are considering a $95\%$ confidence set).
\end{remark}

We now consider the behaviour of a sequence $\{\CR_N\}_{N\in\bN}$ of risk assessments, which may vary with our sample size $N$.

\begin{definition}\label{def:regular}
 We say that $(\CE^{k,\gamma,\delta}_{\CQ|\bx_N}\circ \CR_N)(\xi)$ is regular whenever the supremum in its definition is finite and attained at a point where $\alpha_{\CQ|\bx_N}(Q) < N^\delta$. 
\end{definition}
This definition gives us a criterion, which can be evaluated for a particular set of observations, to determine whether the estimation of our truncated DR-risk assessment is reliable. If our DR-risk assessment is not regular, then our estimate is based on parameters on the boundary of the admissible region $\{Q:\alpha_{\CQ|\bx_N}(Q) < N^\delta\}$. This suggests our estimate  depends critically on the choice of truncation parameter $\delta$. On the other hand, when our DR-risk assessment is regular, we know that we have chosen a local maximum within the admissible region, which suggests that the variation in the truncation parameter $\delta$ has no marginal impact on our estimates.

The fact that Definition \ref{def:regular} depends on the particular observations $\bx_N$ is important, as it provides us with a qualitative criterion to assess reliability of estimates which can be calculated from our observations. At the same time, it is interesting to know whether there exists a choice of truncation parameter $\delta$ which will typically result in a regular DR-risk assessment.

\begin{definition}
We say $(\CE^{k,\gamma}_{\CQ|\bx_N}\circ \CR_N)(\xi)$ is ($\CQ$-)regularizable if, for some $\delta>0$, for all $P\in\mathcal{Q}$, with $P$-probability tending to $1$, as $N\to \infty$, we have that $(\CE^{k,\gamma,\delta}_{\CQ|\bx_N}\circ \CR_N)(\xi)$ is regular.  
\end{definition}

\begin{remark}
 Typically, the divergence will grow like $O(N)$ outside of any neighbourhood of the MLE. Therefore, regularizing with $\delta<1$ is sufficient to guarantee that only points in a neighbourhood of the MLE are considered in the optimization. The question is then whether, for risk assessments $\CR_N$, we can choose  $\delta$ sufficiently large that the local maximum is strictly in the interior of the restricted neighbourhood. If this can be done, then the problem is regularizable. This leads to a delicate interplay between the class of risk assessments, the sample size, and the estimation problem itself.
\end{remark}

\begin{example}\label{normexamplereg1}
 In the normal example above, $\CE^{k,1,\delta}_{\CQ|\bx_N}(\beta X)$ is regular whenever $\beta =o(N^{(\delta+1)/2})$ and $\delta<1$ (for large $N$, with $P$-probability approaching $1$ for every $P\in\CQ$). Consequently, $\CE^{k,1}_{\CQ|\bx_N}(\beta X)$ is regularizable for any fixed $\beta$. This is suggested by the following sketch argument (and will be rigorously proven later).

As $\delta<1$, we are restricting to a neighbourhood of the MLE, so we can be confident that we will select a local extremum. We need only to verify that this occurs at a point with penalty $\alpha_{\CQ|\bx_N}(\mu_*, \sigma^2_*)<N^\delta$. For large $N$, we approximate $\alpha_{\CQ|\bx_N}$ by a quadratic in $(\mu, \sigma^2)$. This gives us the approximate optimization problem 
\[\CE^{k,1,\delta}_{\CQ|\bx_N}(\beta X)\approx \sup_{\mu,\sigma^2} \Bigg\{\beta\mu -\bigg\{\frac{N}{2k\sigma^2}(\bar X-\mu)^2 +\Big(\frac{N}{4k}\Big)\Big(\frac{\sigma^2}{\hat\sigma^2}-1\Big)^2\bigg\}\Bigg\}.\]
The extremum of this approximation is attained by
\[\mu_* = \bar X+\frac{k}{N}\beta\sigma^2,\qquad \sigma^2_* = \hat\sigma^2\Big(1 + \frac{\beta^2 k^2}{N^2}\hat\sigma^2\Big).\]
We now substitute back into the quadratic approximation, to obtain
\begin{align*}
\alpha_{\CQ|\bx_N}(\mu_*,\sigma^2_*)&\approx\frac{k\beta^2}{2N}\hat\sigma^2\Big(1+ \frac{\beta^2k^2}{N^2}\hat\sigma^2\Big)+ \Big(\frac{N}{4k}\Big)\Big(\frac{\beta^2k^2}{N^2}\hat\sigma^2\Big)^2\\
 &=O_P(N^{-1}\beta^2 + N^{-3} \beta^4).
\end{align*}
Provided $\beta = o(N^{(\delta+1)/2})$, this guarantees $\alpha_{\CQ|\bx_N}(\mu_*,\sigma^2_*) = o_P(N^\delta)$, so our DR-expectation is regular. 

For fixed $\beta$ (or more generally $\beta = o(N^{1-\eta})$ for some $\eta>0$), we can find a $\delta$ satisfying these conditions. Therefore, with $P$-probability approaching one for every $P\in \CQ$, as $N\to \infty$, the DR-expectation is regular. Therefore, the DR-expectation is regularizable. 
\end{example}

 To make the argument above rigorous and more generally applicable, we proceed under the following assumption on our observations.
%
 \begin{assumption}\label{assn:expfam}
Assume that
\begin{enumerate}[(i)]
 \item The family $\mathcal{Q}$ is parametrized by an open set $\Theta\subseteq \bR^d$.
 \item Under each $Q\in\CQ$, the observations $\{X\}\cup\{X_n\}_{n\in \bN}$ are iid with density of the form
\[f(x;Q) = h(x)\exp\Big\{ \langle \theta, T(x)\rangle -A(\theta)\Big\}.\]
(in other words, $\mathcal{Q}$ is an exponential family with its natural parametrization), where $T$ and $A$ are fixed functions (called respectively the sufficient statistic and the log-partition function).
 \item The variable of interest is $\xi=\phi(X)$, for some Borel function $\phi$.
 \item The Hessian $\mathfrak{I}_{\theta} = \partial^2 A(\theta)$ (commonly known as the information matrix) is strictly positive definite at every point of $\Theta$. 
 \item The $\mathcal{Q}$-MLE exists and is consistent, with probability tending to $1$ as $N\to\infty$ (that is, for every $P\in \mathcal{Q}$, a maximizer $\hat Q_N$ of the likelihood in $\CQ$ exists with $P$-probability approaching $1$ and $\hat\theta_N=\theta_{\hat Q_N} \to_P \theta_P$).
\end{enumerate}
\end{assumption}

The above assumption leads to the following estimate, which gives us a good understanding of the asymptotic behaviour of $\alpha_{\CQ|\bx_N}$. For simplicity of notation, we shall write $\alpha_{\CQ|\bx_N}(\theta)$ for $\alpha_{\CQ|\bx_N}(Q_\theta)$, where $Q_\theta$ is the measure parametrized by $\theta\in\Theta$.
\begin{lemma}\label{quadpenbound}
 Suppose Assumption \ref{assn:expfam} holds. Then for every $P\in \CQ$, there exists a constant $C>0$ depending on $P$ but independent of $N$, such that we have the uniform bound
\[P\Big(\alpha_{\CQ|\bx_N}(\theta)\geq \frac{N}{C}\|\theta-\hat\theta_N\|(1\wedge \|\theta-\hat\theta_N\|)\text{ for all }\theta\Big) \to 1.\]
\end{lemma}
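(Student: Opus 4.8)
The plan is to exploit the exponential-family structure to turn $\alpha_{\CQ|\bx_N}$ into an explicit convex-analytic object, and then to prove a purely deterministic lower bound that applies once the MLE has settled near $\theta_P$. First I would compute $\alpha_{\CQ|\bx_N}(\theta)$ in closed form. Writing $S_N=\sum_{n=1}^N T(X_n)$, Assumption \ref{assn:expfam}(ii) gives $\ell(\bx_N;\theta)=\sum_n\log h(X_n)+\langle\theta,S_N\rangle-NA(\theta)$ up to an additive constant independent of $\theta$, and on the event that the MLE exists (Assumption \ref{assn:expfam}(v)) its first-order condition reads $\nabla A(\hat\theta_N)=S_N/N$. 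Substituting this into $\alpha_{\CQ|\bx_N}(\theta)=\ell(\bx_N;\hat\theta_N)-\ell(\bx_N;\theta)$ gives
\[\alpha_{\CQ|\bx_N}(\theta)=N\big(A(\theta)-A(\hat\theta_N)-\langle\nabla A(\hat\theta_N),\theta-\hat\theta_N\rangle\big)=:N\,D_A(\theta,\hat\theta_N),\]
the $N$-fold Bregman divergence generated by the convex log-partition function $A$ (all reference-measure and $h$ terms cancel in the difference). This identity is the engine of the proof: it reduces the claim to the deterministic estimate $D_A(\theta,\hat\theta_N)\geq\tfrac1C\|\theta-\hat\theta_N\|(1\wedge\|\theta-\hat\theta_N\|)$.

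Next I would remove the randomness. By consistency (Assumption \ref{assn:expfam}(v)), for any fixed $\rho>0$ we have $\|\hat\theta_N-\theta_P\|<\rho$ with $P$-probability tending to $1$; shrinking $\rho\leq 1$ so that the closed ball $\bar B(\theta_P,2\rho)$ lies in $\Theta$ (possible as $\Theta$ is open), it suffices to bound $D_A(\cdot,\hat\theta_N)$ on the event $\{\|\hat\theta_N-\theta_P\|<\rho\}$, uniformly over such $\hat\theta_N$. On the compact set $\bar B(\theta_P,2\rho)$ the information matrix $\mathfrak{I}_\theta=\partial^2 A(\theta)$ is continuous and, by Assumption \ref{assn:expfam}(iv), positive definite, so its smallest eigenvalue is bounded below there by some $\lambda>0$.

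The remaining deterministic step I would carry out by a one-dimensional argument along rays, using that the natural parameter space $\Theta$ of an exponential family is convex (so segments $[\hat\theta_N,\theta]$ stay in the domain, which the integral representation of $D_A$ requires). Fix $\theta\in\Theta$, put $t=\|\theta-\hat\theta_N\|$, $u=(\theta-\hat\theta_N)/t$, and let $\psi(s)=D_A(\hat\theta_N+su,\hat\theta_N)$, so that $\psi(0)=\psi'(0)=0$ and $\psi''(s)=u^\top\mathfrak{I}_{\hat\theta_N+su}\,u\geq 0$. For $s\in[0,\rho]$ the point $\hat\theta_N+su$ remains in $\bar B(\theta_P,2\rho)$, hence $\psi''(s)\geq\lambda$ there. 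If $t\leq\rho$ this integrates to the quadratic bound $\psi(t)\geq\tfrac{\lambda}{2}t^2$; if $t>\rho$, monotonicity of $\psi'$ gives $\psi'(s)\geq\psi'(\rho)\geq\lambda\rho$ for all $s\geq\rho$, so $\psi(t)\geq\tfrac{\lambda}{2}\rho^2+\lambda\rho(t-\rho)\geq\tfrac{\lambda\rho}{2}t$. Combining the two regimes and taking $C=2/(\lambda\rho)$ yields $D_A(\theta,\hat\theta_N)\geq\tfrac1C\,t(1\wedge t)$ uniformly over $\theta\in\Theta$, with $C$ depending only on $\lambda,\rho$ (hence on $P$); multiplying by $N$ gives the lemma on an event of probability tending to $1$.

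The main obstacle is the far-field linear regime: the ball on which I control the curvature is small and fixed, whereas $\Theta$ may be unbounded and the curvature of $A$ could in principle degenerate far away, so a naive Taylor estimate cannot reach every $\theta$. The device that resolves this is the monotonicity of the directional derivative $\psi'$ coming from convexity of $A$ — once $\psi'$ has been raised to $\lambda\rho$ at the edge of the controlled ball it can never fall back, forcing at-least-linear growth of $\psi$ globally with no further control of the Hessian at infinity. The only other point needing care is ensuring segments remain in the domain, which is exactly why I invoke convexity of the natural parameter space.
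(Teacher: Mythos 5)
Your proposal is correct, but note that the paper does not actually contain a proof of this lemma: its ``proof'' is a one-line deferral to the proof of Lemma 3 in \cite{Cohen2016}. Your argument therefore supplies, self-contained, what the paper only cites. The route you take --- using the MLE first-order condition $\nabla A(\hat\theta_N)=S_N/N$ (valid since $\Theta$ is open, so the maximizer is interior) to write $\alpha_{\CQ|\bx_N}(\theta)=N\,D_A(\theta,\hat\theta_N)$ as a Bregman divergence of the log-partition function, localizing $\hat\theta_N$ near $\theta_P$ by consistency, and then running a ray argument split into a quadratic regime (uniform ellipticity of $\mathfrak{I}_\theta$ on a compact ball inside $\Theta$) and a linear far-field regime (using only monotonicity of the directional derivative, i.e.\ convexity of $A$) --- is exactly the kind of argument such a statement requires, and your quantitative bookkeeping ($C=2/(\lambda\rho)$ and the three cases $t\le\rho$, $\rho<t\le 1$, $t>1$) checks out: on the event $\{\text{MLE exists and }\|\hat\theta_N-\theta_P\|<\rho\}$, whose $P$-probability tends to $1$ by Assumption \ref{assn:expfam}(v), the stated bound holds simultaneously for all $\theta$.

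One point deserves correction, though it does not break the proof. You justify keeping the segment $[\hat\theta_N,\theta]$ ``in the domain'' by asserting that $\Theta$ is convex because it is the natural parameter space. Assumption \ref{assn:expfam}(i)--(ii) only says that $\Theta$ is an \emph{open} set carrying the natural parametrization; it need not be the full natural domain $\{\theta:\int h(x)e^{\langle\theta,T(x)\rangle}dx<\infty\}$ and need not be convex (the paper's Pareto examples take $\Theta$ to be $\theta>1$ or $\theta>0$, which happen to be convex, but nothing in the assumption forces this). The repair is what your argument implicitly uses anyway: $A$ is finite, smooth and convex on the interior of the full natural domain, which \emph{is} convex (by H\"older) and contains the open set $\Theta$; hence the segment joining $\hat\theta_N$ to any $\theta\in\Theta$ lies in that interior, where $\psi''\ge 0$ holds, while the strict bound $\psi''\ge\lambda$ is only ever invoked on $\bar B(\theta_P,2\rho)\subseteq\Theta$, where Assumption \ref{assn:expfam}(iv) together with continuity of $\partial^2 A$ (automatic for exponential families) applies. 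With the convexity claim re-routed through the full natural domain rather than $\Theta$ itself, your proof is complete.
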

\begin{proof}
 This follows from the proof of Lemma 3 in \cite{Cohen2016}.
\end{proof}

The following abstract approximation result will be useful.
\begin{theorem}\label{thm:approxthm}
Consider the maximization of the general function 
\[f_N(\psi) := g_N(\psi) - \frac{1}{k}\alpha_{\CQ|\bx_N}(\hat\theta+\psi)\]
Suppose that $\alpha_{\CQ|\bx_N}$ arises from a setting where the result of Lemma \ref{quadpenbound} can be applied and, for some $\epsilon\in\,]0,1/2]$, for every $P\in\CQ$, for every $N$ the function $g_N$ is $C^3$ and satisfies, for $B$ a ball of constant radius around $\psi=0$,
\begin{enumerate}[(i)]
\item $g_N(0)=0$,
 \item  $\|g_N'(\psi)\|=o_P(N^{1-\epsilon})$ uniformly on $B$,
  \item $\|g_N''(0)\| = o_P(N)$,
  \item $\|g_N'''(\psi)\| = O_P(N)$ uniformly on $B$.
\end{enumerate}
Then defining
\[\delta=1-2\epsilon\]
there exists a point $\psi^*_N$ such that, for each $P\in\CQ$, with $P$-probability approaching $1$ as $N\to\infty$, we know that $\psi^*_N$ maximizes the value of $f_N$ on the set  $\{\psi:\alpha_{\CQ|\bx_N}(\hat\theta+\psi)\leq N^{\delta}\}$ and, furthermore, $\alpha_{\CQ|\bx_N}(\hat\theta+\psi^*_N)= o_P(N^{\delta})$.
\end{theorem}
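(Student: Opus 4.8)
The plan is to run the entire argument on a single \enquote{good event} of $P$-probability tending to $1$: the intersection of the event on which Lemma~\ref{quadpenbound} supplies the lower bound $\alpha_{\CQ|\bx_N}(\hat\theta+\psi)\ge \frac{N}{C}\|\psi\|(1\wedge\|\psi\|)$ for all $\psi$, with the events on which the uniform bounds (ii)--(iv) hold. On this event I construct $\psi^*_N$ and verify interiority; off it $\psi^*_N$ may be set arbitrarily, which is harmless since all conclusions are asserted only with probability approaching $1$. Two observations set up the argument. First, $f_N(0)=g_N(0)-\frac1k\alpha_{\CQ|\bx_N}(\hat\theta)=0$, since the divergence vanishes at the MLE and $g_N(0)=0$ by (i), so the constrained maximum of $f_N$ is at least $0$. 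Second, the sublevel set $K_N:=\{\psi:\alpha_{\CQ|\bx_N}(\hat\theta+\psi)\le N^\delta\}$ is compact: $\alpha_{\CQ|\bx_N}$ is continuous in the exponential-family setting underlying Lemma~\ref{quadpenbound}, so $K_N$ is closed, while the lower bound forces $\frac{N}{C}\|\psi\|(1\wedge\|\psi\|)\le N^{1-2\epsilon}$ on $K_N$, whence $\|\psi\|\le\sqrt{C}\,N^{-\epsilon}$ for large $N$. In particular $K_N\subseteq B$ eventually and sits well inside $\Theta$, so continuity of $f_N$ on the compact $K_N$ yields a maximizer $\psi^*_N$.

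The core step is a \enquote{crowding-out} estimate bounding $\|\psi^*_N\|$. Writing $M_N:=\sup_B\|g_N'\|=o_P(N^{1-\epsilon})$ by (ii), the fundamental theorem of calculus along the segment $[0,\psi]\subseteq B$ together with (i) gives $g_N(\psi)\le M_N\|\psi\|$. Combining this with the quadratic lower bound (valid since $\|\psi\|\le 1$ on $K_N$ for large $N$), I obtain for every $\psi\in K_N$
\[f_N(\psi)\le M_N\|\psi\|-\frac{N}{Ck}\|\psi\|^2,\]
a downward parabola in $\|\psi\|$ that is strictly negative as soon as $\|\psi\|>\frac{CkM_N}{N}=o_P(N^{-\epsilon})$. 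Since $f_N(\psi^*_N)\ge f_N(0)=0$, the maximizer must satisfy $\|\psi^*_N\|\le\frac{CkM_N}{N}=o_P(N^{-\epsilon})$.

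Interiority, and hence regularity, follows by feeding this back into $f_N(\psi^*_N)\ge 0$. That inequality rearranges to $0\le\alpha_{\CQ|\bx_N}(\hat\theta+\psi^*_N)\le k\,g_N(\psi^*_N)\le kM_N\|\psi^*_N\|$, and inserting the norm bound gives
\[\alpha_{\CQ|\bx_N}(\hat\theta+\psi^*_N)\le \frac{Ck^2M_N^2}{N}=o_P(N^{1-2\epsilon})=o_P(N^\delta),\]
using $M_N^2=o_P(N^{2-2\epsilon})$ and $\delta=1-2\epsilon$. As $o_P(N^\delta)<N^\delta$ with probability approaching $1$, the maximizer lies strictly inside $K_N$, so it is simultaneously the maximizer of $f_N$ over $K_N$ and a regular point, as claimed.

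I expect the main obstacle to be the bookkeeping that keeps every ingredient on one event of probability tending to $1$: the Lemma bound, the compactness and shrinkage of $K_N$ at rate $N^{-\epsilon}$ (itself a consequence of the Lemma), the containment $K_N\subseteq B$ needed before the gradient bound may be applied, and the uniform control of $g_N$ must all hold together. It is worth remarking that only hypotheses (i) and (ii), with Lemma~\ref{quadpenbound}, are used for this coarse interiority statement; I expect the higher-order bounds (iii) and (iv) to be needed not here but in the sharper localization of $\psi^*_N$ via its first-order conditions and a cubic Taylor remainder, consistent with the explicit extremum computed in Example~\ref{normexamplereg1}.
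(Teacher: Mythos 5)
Your proof is correct, and its first half coincides with the paper's: both run a `crowding-out' argument combining the lower bound of Lemma \ref{quadpenbound} with hypotheses (i)--(ii), showing that $f_N(\psi)\le M_N\|\psi\|-\frac{N}{Ck}\|\psi\|^2<0$ outside a ball of radius $CkM_N/N=o_P(N^{-\epsilon})$, where $M_N:=\sup_B\|g_N'\|$, so that (since $f_N(0)=0$) any constrained maximizer lies in that ball. Where you genuinely depart from the paper is in how you finish. The paper localizes \emph{all} local maxima in the ball of radius $o_P(N^{-\epsilon/2})$ via its appendix Lemma \ref{abstractapprox} --- this is exactly where hypotheses (iii)--(iv) enter, through $\|f_N''(0)^{-1}\|=O_P(N^{-1})$ and $\|f_N'''\|=O_P(N)$ --- and then bounds the divergence at the optimizer by a Taylor expansion, $\alpha_{\CQ|\bx_N}(\hat\theta+\psi^*_N)=N\psi_N^{*\top}\big(\mathfrak{I}_{\hat\theta}+O_P(\|\psi^*_N\|)\big)\psi^*_N=o_P(N^{1-2\epsilon})$. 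You instead exploit the inequality $f_N(\psi^*_N)\ge f_N(0)=0$ directly, which yields the self-bounding estimate $\alpha_{\CQ|\bx_N}(\hat\theta+\psi^*_N)\le k\,g_N(\psi^*_N)\le kM_N\|\psi^*_N\|\le Ck^2M_N^2/N=o_P(N^\delta)$, using no second- or third-order information at all; your compactness argument for the sublevel set $K_N$ also makes the existence of the constrained maximizer explicit, where the paper leaves it somewhat implicit. This is a genuine simplification: as you note, it shows hypotheses (iii)--(iv) are not needed for the theorem as stated, and your version suffices for all the regularizability applications in the Pareto section, which invoke only the stated conclusion. What the paper's longer route buys is quantitative information your argument does not provide: the Newton-step location $\psi^*_N=-f_N''(0)^{-1}f_N'(0)+o_P(N^{-\epsilon})$ of the optimizer, consistent with the explicit extremum computations in the examples and the large-sample expansions of the DR-expectation quoted earlier in the paper, together with the statement that \emph{every} local maximum in the moderate ball collapses into the $o_P(N^{-\epsilon})$ ball. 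The one point to tighten in your write-up is the bookkeeping you yourself flag: conditions (ii)--(iv) are $o_P$/$O_P$ statements rather than events, so the clean formulation is to keep $M_N$ as a random sequence, restrict to the Lemma \ref{quadpenbound} event intersected with the containment event $K_N\subseteq B$, and convert the resulting random bounds into the final $o_P(N^\delta)$ claim at the end --- which is exactly how your inequalities are arranged, so this is cosmetic rather than a gap.
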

\begin{proof}
 See Appendix.
\end{proof}

\begin{example}
In the normal setting considered in Example \ref{normexamplereg1}, we are seeking to maximize
\[\beta\mu - \frac{1}{k}\alpha_{\CQ|\bx}(\mu, \sigma^2) = \beta \bar X +\big\{ \beta \psi_\mu  - \frac{1}{k}\alpha_{\CQ|\bx}(\psi)\big\} \]
where $\psi= (\psi_\mu, \psi_{\sigma^2}) = (\mu-\bar X, \sigma^2-\hat\sigma^2)$. Provided $\beta = o(N^{1-\eta})$ for some $\eta>0$, we can take $g(\psi) = \beta \psi_\mu$, which satisfies the conditions of Theorem \ref{thm:approxthm}. By application of the theorem, it follows that the DR-expectation is regularizable in this case, giving a rigorous proof of the condition obtained by quadratic approximation in Example \ref{normexamplereg1}.
\end{example}

 In the above example, we have considered the case $\gamma=1$. We can also obtain a sufficient condition for the case $\gamma>1$ using an extension of this method, assuming that $\epsilon= 1/2$.
\begin{lemma} \label{lem:delta0reg}
 Suppose that, in the case $\gamma=1$, our truncated optimization problem \eqref{eq:truncoptim} has a maximizer $Q^*_1$ where $\alpha_{\CQ|\bx_N}(Q^*_1)<1$, when taking a truncation parameter $\delta\geq 0$. Then the truncated problem in the case $\gamma\geq1$ certainly has a maximizer $Q^*_\gamma$ with $\alpha_{\CQ|\bx_N}(Q^*_\gamma)<1$, using the same truncation parameter.
\end{lemma}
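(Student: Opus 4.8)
The plan is to exploit a single elementary fact about powers: for $t\ge 0$ and $\gamma\ge 1$ one has $t^\gamma\le t$ when $t\le 1$ and $t^\gamma\ge t$ when $t\ge 1$, with the two regimes separated by the crossover $t=1$. Applying this with $t=\tfrac1k\alpha_{\CQ|\bx_N}(Q)$ turns the $\gamma$-problem into a perturbation of the $\gamma=1$ problem that can only help on the low-divergence side and only hurt on the high-divergence side. Concretely, writing $f_\gamma(Q)=\CR(\CL_Q(\xi)|\bx_N)-\big(\tfrac1k\alpha^{(\delta)}_{\CQ|\bx_N}(Q)\big)^\gamma$ for the truncated objective, I would first record the two pointwise comparisons $f_\gamma\ge f_1$ on $\{\,\tfrac1k\alpha_{\CQ|\bx_N}(Q)\le 1\,\}$ and $f_\gamma\le f_1$ on $\{\,\tfrac1k\alpha_{\CQ|\bx_N}(Q)\ge 1\,\}$, both valid within the admissible set $\{\alpha_{\CQ|\bx_N}\le N^\delta\}$ since the truncation is identical for every $\gamma$.

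With these in hand the maximizer $Q^*_1$ serves as a certificate. Because $Q^*_1$ lies strictly below the crossover, raising its penalty to the power $\gamma$ only decreases it, so $f_\gamma(Q^*_1)\ge f_1(Q^*_1)=\sup_Q f_1=:M_1$. Conversely, at any admissible $Q$ sitting above the crossover the penalty only grows, so $f_\gamma(Q)\le f_1(Q)\le M_1\le f_\gamma(Q^*_1)$. Hence no model in the high-divergence region can strictly beat $Q^*_1$ in the $\gamma$-problem, and the supremum of $f_\gamma$ over the admissible set coincides with its supremum over the low-divergence region, where $Q^*_1$ already achieves a value at least $M_1$. It follows that either $Q^*_1$ is itself a maximizer of $f_\gamma$, or any maximizer lies strictly below the crossover; in both cases there is a maximizer $Q^*_\gamma$ with small divergence, which is the assertion.

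The step I expect to be the main obstacle is the \emph{existence} of the $\gamma$-maximizer: the hypothesis supplies a maximizer only for $\gamma=1$, and the comparison above pins down \emph{where} a maximizer of $f_\gamma$ must sit but not \emph{that} one exists. To close this I would argue that, after discarding the high-divergence region (which the certificate shows is never strictly optimal), the relevant domain is a divergence sublevel set; under Assumption \ref{assn:expfam} the divergence $\alpha_{\CQ|\bx_N}$ is strictly convex and coercive (its quadratic lower bound being exactly Lemma \ref{quadpenbound}), so such sublevel sets are compact, and upper semicontinuity of $Q\mapsto\CR(\CL_Q(\xi)|\bx_N)$ then yields an attained maximum. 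The remaining delicate point is bookkeeping the crossover level against the stated threshold: the comparison is naturally governed by $\tfrac1k\alpha_{\CQ|\bx_N}=1$, so some care (or the normalization $k\ge 1$) is needed to read off the stated conclusion $\alpha_{\CQ|\bx_N}(Q^*_\gamma)<1$, and in particular to rule out the maximizer drifting onto the boundary $\{\alpha_{\CQ|\bx_N}=N^\delta\}$ as the penalty is flattened by the larger exponent.
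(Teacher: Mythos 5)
Your argument is correct and takes essentially the same route as the paper: the paper's entire proof of Lemma \ref{lem:delta0reg} is the single observation that $|x|^\gamma<|x|$ for $|x|<1$ and $|x|^\gamma>|x|$ for $|x|>1$, and your certificate argument (using $Q^*_1$ to dominate every admissible model above the crossover, so that any maximizer of the $\gamma$-problem either lies below the crossover or ties with $Q^*_1$) is exactly the bookkeeping the paper leaves implicit. The two subtleties you flag are genuine, but they are gaps in the paper's one-liner rather than in your reasoning. First, existence of a $\gamma$-maximizer indeed does not follow formally from existence at $\gamma=1$: one can construct families $\CQ$ (e.g.\ a sequence of models with scaled divergence $\tfrac{1}{k}\alpha_{\CQ|\bx_N}\equiv\tfrac12$ and risk values $\CR$ increasing to a limit) for which the supremum in \eqref{eq:truncoptim} with $\gamma>1$ is approached but never attained even though the $\gamma=1$ problem has a maximizer at the MLE; the paper silently relies on the compactness and continuity available in its applications (truncated sublevel sets of $\alpha_{\CQ|\bx_N}$ near the MLE, via Lemma \ref{quadpenbound}), which is precisely the repair you sketch. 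Second, the crossover really is at $\tfrac{1}{k}\alpha_{\CQ|\bx_N}=1$, not $\alpha_{\CQ|\bx_N}=1$, so the statement matches the proof only for $k=1$: for $k>1$ a model with $\alpha_{\CQ|\bx_N}\in[1,k)$ can strictly beat $Q^*_1$ in the $\gamma$-problem (take $\CR(Q^*_1)=0$, $\alpha_{\CQ|\bx_N}(Q^*_1)=0$, and a competitor with $\tfrac{1}{k}\alpha_{\CQ|\bx_N}=\tfrac12$ and risk value slightly below $\tfrac{k}{2}$), so the threshold in both hypothesis and conclusion should be read in units of the scaled penalty $\tfrac{1}{k}\alpha_{\CQ|\bx_N}$. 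Note that $k\geq 1$ alone fixes only the hypothesis side, not the conclusion.
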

\begin{proof}
 This is immediate from the fact that $|x|^\gamma<|x|$ for $|x|<1$ and $|x|^\gamma>|x|$ for $|x|>1$.
\end{proof}

\begin{example}
In the normal setting, we see that if $\beta=o(N^{1/2})$ then Theorem \ref{thm:approxthm} yields the regularization parameter $\delta=1-2\epsilon =0$. In other words, we truncate to a set $\alpha_{\CQ|\bx_N}< o_P(1)$. By Lemma \ref{lem:delta0reg}, this implies that the same regularization will be sufficient for all choices of $\gamma<\infty$.

This argument suggests that $\beta=o(N^{1/2})$ is a fundamental requirement for reliable statistical estimation of $E[\beta X]$ from $N$ observations, under the assumption that $X$ and our observations are iid normal with unknown mean and variance. Comparing with the case $\gamma=\infty$, where the DR-expectation is the upper bound of a confidence interval, we would need $\beta=o(N^{1/2})$ to ensure that 
\[\big|\CE^{k,\infty}_{\CQ|\bx_N}(\beta X) - E_{\hat Q}[\beta X]\big| \approx \sqrt{2k}\frac{|\beta| }{\sqrt{N}}\hat\sigma \to 0,\]
 where $\hat Q$ is the MLE.
\end{example}

\begin{remark}
 The example above shows that there is a close asymptotic relationship between regularity and the convergence of confidence intervals. However, the concept of regularity has the advantage that it can be applied for a given sample (of fixed size), rather than only being meaningful in an asymptotic sense. In particular, for a given sample, we can evaluate (for fixed $\delta\geq0$) whether our truncated DR-expectation is regular, which allows us to qualitatively assess the statistical reliability of our estimates \emph{based on the data at hand, and for the variables of interest}. In particular, if the function to be maximized when calculating $(\CE^{k, \gamma}_{\CQ|\bx_N}\circ \CR)(\xi)$ does not have a local maximum near the MLE, then we know that the DR-risk assessment is not regular for any $\delta$.  This is unlike a confidence interval, which will generally not give a qualitative assessment of reliability for any fixed sample.

If we find that our problem is not regular, then we know that the observed data are insufficient to guide our calculation of DR-expectations -- our choice of regularization has an overriding impact on our conclusions.
\end{remark}

\section{Heavy tails}
The example given in the previous section shows us that, in a Gaussian setting, provided $\beta$ is not too large relative to $N$ (in particular $\beta=o(N^{1-\eta})$ for some $\eta>0$),  the DR-expectation $\CE^{k,1}_{\CQ|\bx_N}(\beta X)$ is regularizable. This suggests asymptotic bounds on the size of random variables we are willing to consider, if we wish to account for our statistical uncertainty. In what follows, we shall seek to show that a similar relationship exists in a heavy tailed setting, when we calculate expected shortfall and related quantities.

We shall focus our attention on the following special case.

\begin{assumption}\label{assn:Pareto}
The family of measures $\CQ$ describes models under which our observations $\{X\}\cup\{X_n\}_{n\in\bN}$ are iid from a Pareto distribution (with known minimal value $1$), that is, for a model parametrized by $\theta$, we have the density
\[f(x;\theta) = \theta x^{-(1+\theta)}; x>1.\]
The possible values of $\theta$ will be assumed to satisfy one of two cases:
\begin{enumerate}[(i)]
 \item $\CQ$ corresponds to all $\theta>0$ (which implies a valid probability distribution for $X$, but no integrability),
 \item $\CQ$ corresponds to all $\theta>1$ (which implies $X$ is integrable).
\end{enumerate}
\end{assumption}
We should note that this is an exponential family of distributions, so the above approximation results (in particular Lemma \ref{quadpenbound}) can be applied. Clearly Assumption \ref{assn:Pareto}(ii) is stronger than Assumption \ref{assn:Pareto}(i).

It is an easy exercise to show the following:
\begin{proposition}
In the setting of Assumption \ref{assn:Pareto}, the MLE is given by 
\[\hat\theta = \frac{N}{\sum_i \log(x_i)}\]
and the divergence by
\[\alpha_{\CQ|\bx_N}(\theta) = N\bigg(-\log\Big(\frac{\theta}{\hat\theta}\Big) -1+\frac{\theta}{\hat\theta}\bigg).\]
\end{proposition}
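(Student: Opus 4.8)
The plan is to exploit the fact that, in the iid setting, the log-likelihood reduces to $\ell(\bx_N;\theta) = \sum_{i=1}^N \log f(x_i;\theta)$ (as recorded in the iid remark), and that the divergence $\alpha_{\CQ|\bx_N}(\theta) = -\ell(\bx_N;\theta) + \sup_{\theta'}\ell(\bx_N;\theta')$ is insensitive to the choice of reference measure, since any additive reference term appears identically in both contributions and cancels. Both assertions then reduce to a direct computation for the exponential family at hand.

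First I would substitute the Pareto density to obtain
\[\ell(\bx_N;\theta) = \sum_{i=1}^N\big(\log\theta - (1+\theta)\log x_i\big) = N\log\theta - (1+\theta)\sum_{i=1}^N \log x_i.\]
Differentiating in $\theta$ gives $\partial_\theta \ell = N/\theta - \sum_i \log x_i$, whose unique zero is $\hat\theta = N/\sum_i \log x_i$; since $\partial^2_\theta \ell = -N/\theta^2 < 0$, the log-likelihood is strictly concave and this critical point is its global maximizer, establishing the claimed MLE. (In case (ii), where $\CQ$ is restricted to $\theta>1$, one checks that with $P$-probability tending to one the interior maximizer satisfies $\hat\theta>1$, so no boundary correction is needed.)

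For the divergence, the decisive algebraic step is the first-order identity $\sum_i \log x_i = N/\hat\theta$. I would use it to rewrite both $\ell(\bx_N;\theta)$ and $\ell(\bx_N;\hat\theta)$ purely in terms of $\theta$ and $\hat\theta$, namely $\ell(\bx_N;\theta) = N\log\theta - (1+\theta)N/\hat\theta$ and $\ell(\bx_N;\hat\theta) = N\log\hat\theta - (1+\hat\theta)N/\hat\theta$. Forming the difference $-\ell(\bx_N;\theta)+\ell(\bx_N;\hat\theta)$, the two $N/\hat\theta$ terms cancel and the survivors combine to
\[\alpha_{\CQ|\bx_N}(\theta) = N\Big(-\log\big(\theta/\hat\theta\big) - 1 + \theta/\hat\theta\Big),\]
which is the stated formula.

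There is essentially no obstacle here; this is a routine exponential-family calculation, as the paper's preamble (\enquote{an easy exercise}) signals. The only points requiring a moment's care are observing that the reference measure drops out of the divergence, so that $\ell$ may be taken as the raw sum of log-densities, and remembering to substitute the first-order condition $\sum_i \log x_i = N/\hat\theta$ to secure the clean cancellation; without it the expression does not visibly reduce to the displayed Bregman-type form.
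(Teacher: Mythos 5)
Your proof is correct and is exactly the routine computation the paper leaves as ``an easy exercise'': the paper gives no proof of this proposition, and your direct maximization of the Pareto log-likelihood followed by substitution of $\sum_i \log x_i = N/\hat\theta$ into $\ell(\bx_N;\hat\theta)-\ell(\bx_N;\theta)$ is the intended argument. Your side remarks (cancellation of the reference measure and the $\hat\theta>1$ caveat in case (ii), which the paper itself invokes later via consistency of the MLE) are accurate and appropriately handled.
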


\begin{remark}
This is a significantly simplified version of a standard extreme value estimation problem. In particular, consider a model for excesses over a threshold $u$. By the Pickands--Balkema--de Haan theorem (see for example Embrechts, Kl\"uppelberg and Mikosch \cite{Embrechts1997}), we know that for large $u$, the probability distribution in such a setting is typically approximated by the generalized Pareto distribution
\[P(X-u\leq y|X>u) \approx \begin{cases} 1-\Big(1+\frac{ky}{\sigma}\Big)^{-1/k} & \text{if }k\neq 0,\\
                                     1-e^{-y/\sigma}& \text{if }k=0.
                                    \end{cases}\]
If we assume that this is the correct distribution family (so we are in the limiting regime), the scale variable $\sigma$ is known (for simplicity) and $k>0$ (so our distribution is unbounded and not exponential), then estimating $k$ is equivalent to estimating $\theta$ in the above (classical) Pareto setting (after reparametrization). 

Of course, in practice, the estimation of $\sigma$ and the choice of cut-off value $u$ are significant issues in their own right. What we shall see is that, even without these additional concerns, the estimation error associated with $k$ (equivalently $\theta$) is sufficient to restrict which risk assessments can be reliably estimated.
\end{remark}

In what follows, we shall give relationships, for a variety of common risk assessment methods, between the growth of a parameter $\beta$ (which will describe the `extremity' of our risk assessment) and the sample size $N$, such that the DR-risk assessment is regularizable. This gives a method of assessing the reliability of our statistical estimates. We should point out that, if this were to be used in the context of fitting the excesses over a threshold, our sample size $N$ would refer exclusively to the number of excesses we observe, and any probabilities would be conditional on being over the threshold point.

\subsection{Expected shortfall}
The first example we shall consider is the DR-expected shortfall, as defined in Example \ref{ex:ESdefn}. As expected shortfall is a convex expectation, we know that the DR-expected shortfall is also a convex expectation. Given the expected shortfall is only defined for integrable random variables, we shall work under Assumption \ref{assn:Pareto}(ii).

\begin{lemma}
 Without truncation, under Assumption \ref{assn:Pareto}(ii), the DR-expected shortfall is infinite for all $N$, and all probability levels $\beta$, that is 
\[(\CE^{k,\gamma}_{\CQ|\bx_N}\circ \mathrm{ES}_\beta)(X) = \infty\]
for every set of observations $\bx_N$ and all choices of $k,\gamma<\infty$.
\end{lemma}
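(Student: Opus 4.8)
The plan is to exploit the fact that the parameter set in Assumption \ref{assn:Pareto}(ii) is the open half-line $\theta>1$, and that the expected shortfall of a Pareto law blows up precisely as $\theta$ approaches the integrability boundary $\theta=1$, whereas the divergence penalty stays bounded there. Since the supremum defining the composition $\CE^{k,\gamma}_{\CQ|\bx_N}\circ\ES_\beta$ ranges over all $\theta>1$, approaching this boundary from within $\CQ$ will drive the objective to $+\infty$.

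First I would compute the expected shortfall under a single Pareto($\theta$) model; under the iid assumption the conditional law $\CL_Q(X|\bx_N)$ is simply the marginal Pareto($\theta$) law, so $\CR$ reduces to the ordinary expected shortfall of that law. Using $F_\theta(x)=1-x^{-\theta}$, the $(1-\beta)$-quantile is $F_\theta^{-1}(1-\beta)=\beta^{-1/\theta}$, and a direct integration (convergent exactly because $\theta>1$) gives
\[\ES_\beta(X;\theta)=E_\theta[X\mid X\geq\beta^{-1/\theta}]=\frac{\theta}{\theta-1}\,\beta^{-1/\theta}.\]
This step is routine, but it is the crux of the argument: the prefactor $\theta/(\theta-1)$ diverges as $\theta\downarrow1$, while $\beta^{-1/\theta}\to\beta^{-1}$ stays finite and positive, so $\ES_\beta(X;\theta)\to+\infty$ as $\theta\downarrow1$, for every fixed level $\beta$.

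Next I would examine the penalty along the same limit. From the explicit divergence $\alpha_{\CQ|\bx_N}(\theta)=N\big(-\log(\theta/\hat\theta)-1+\theta/\hat\theta\big)$, letting $\theta\downarrow1$ yields the finite limit $N\big(\log\hat\theta-1+1/\hat\theta\big)$, i.e.\ the divergence evaluated at $\theta=1$, which is a finite nonnegative constant depending only on $\bx_N$. Consequently $\big(\tfrac1k\alpha_{\CQ|\bx_N}(\theta)\big)^\gamma$ remains bounded as $\theta\downarrow1$ for any finite $k,\gamma$.

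Finally I would combine the two observations. Choosing any sequence $\theta_n\downarrow1$ in $\CQ$, the objective
\[\ES_\beta(X;\theta_n)-\Big(\tfrac1k\alpha_{\CQ|\bx_N}(\theta_n)\Big)^\gamma\]
has first term tending to $+\infty$ and second term bounded, hence tends to $+\infty$; taking the supremum gives $(\CE^{k,\gamma}_{\CQ|\bx_N}\circ\ES_\beta)(X)=\infty$, as claimed. I do not expect a genuine obstacle here: the whole phenomenon is that the likelihood cannot penalize parameters arbitrarily close to the non-integrability boundary at a cost that keeps pace with the unbounded risk they carry. The only points requiring care are confirming that the approach to $\theta=1$ stays strictly inside $\CQ$ (so the supremum legitimately sees these models) and that, although the expected-shortfall integral is finite for each $\theta>1$, its value is genuinely unbounded in $\theta$ near the boundary, which the closed form above makes transparent.
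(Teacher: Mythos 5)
Your proposal is correct and follows essentially the same route as the paper's proof: compute $\mathrm{ES}_\beta(X;\theta)=\frac{\theta}{\theta-1}\beta^{-1/\theta}$, observe that it diverges as $\theta\downarrow 1$ while the divergence $\alpha_{\CQ|\bx_N}(\theta)$ remains bounded near that boundary, and conclude the supremum is $+\infty$. Your version merely spells out the details (the explicit finite limit of the penalty and the sequence $\theta_n\downarrow 1$) that the paper leaves implicit.
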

\begin{proof}
 Under Assumption \ref{assn:Pareto}, for a given $\theta>1$, we can calculate the Expected shortfall with tail probability $\beta$,
\[\mathrm{ES}_\beta = E[X|X>F_{\theta}^{-1}(1-\beta)] = \frac{\theta}{\theta-1}\beta^{-1/\theta}.\]
As $\theta\downarrow 1$, we observe that $\mathrm{ES}_\beta \to \infty$, but $\alpha_{\CQ|\bx_N}(\theta)\not\to \infty$.   Therefore, for every $\gamma<\infty$, it is easy to see that $(\CE^{k,\gamma}_{\CQ|\bx_N} \circ \mathrm{ES}_\beta)(X) =\infty.$
\end{proof}
\begin{remark}
 In the case $\gamma=\infty$, by monotonicity we will have 
\[(\CE^{k,\gamma}_{\CQ|\bx_N} \circ \mathrm{ES}_\beta)(X) = \frac{\theta^*}{\theta^*-1}\beta^{-1/\theta^*},\]
where $\theta^*$ is the smallest value\footnote{This assumes $\theta^*>1$, otherwise our risk assessment takes the value $\infty$.} such that $\alpha_{\CQ|\bx_N}(\theta)\leq k$. This condition is the same for all $N$ -- in particular, using only a single observation, we  give a finite estimate of the expected shortfall at any level $\beta\approx 0$ (i.e. arbitrarily far into the tail), which is intuitively unreasonable.  This is the `confidence interval' bound discussed in Embrechts, Kl\"uppelberg and Mikosch \cite{Embrechts1997}.
\end{remark}

This problem can be treated through regularization techniques, giving qualitatively different conclusions.
\begin{theorem}
 Under Assumption \ref{assn:Pareto}(ii), in the case $\gamma=1$, the DR-expected shortfall with tail probability $\beta$ is regularizable whenever $\beta^{-1}= o(N^{1-\eta})$, for some $\eta>0$.
\end{theorem}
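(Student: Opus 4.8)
The goal is to verify that the DR-expected shortfall at level $\beta$ fits into the framework of Theorem \ref{thm:approxthm}. The plan is to write the composition as a supremum over $\theta$ of the expected shortfall $\mathrm{ES}_\beta(\theta) = \tfrac{\theta}{\theta-1}\beta^{-1/\theta}$ penalized by the truncated divergence, recentre at the MLE by setting $\psi = \theta - \hat\theta_N$, and identify the reward function $g_N(\psi) := \mathrm{ES}_\beta(\hat\theta_N+\psi) - \mathrm{ES}_\beta(\hat\theta_N)$. Since the penalty $\alpha_{\CQ|\bx_N}$ already satisfies the hypotheses of Lemma \ref{quadpenbound} (the Pareto family being exponential), everything reduces to checking the four $C^3$ growth conditions on $g_N$ and reading off the value of $\epsilon$, hence of $\delta = 1-2\epsilon$.

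First I would compute the derivatives of $\mathrm{ES}_\beta(\theta)$ with respect to $\theta$. Writing $r(\theta) = \tfrac{\theta}{\theta-1}$ and $s(\theta) = \beta^{-1/\theta} = \exp\{\tfrac{1}{\theta}\log(1/\beta)\}$, the product rule gives the key observation that each derivative of $s$ brings down a factor of $\log(1/\beta) = \beta^{-0}\log(1/\beta)$, which grows like $\log(1/\beta)$. Since $\beta^{-1} = o(N^{1-\eta})$ forces $\log(1/\beta) = O(\log N)$, and the prefactor $s(\theta) = \beta^{-1/\theta}$ evaluated near $\hat\theta_N \to_P \theta_P > 1$ is $O_P(\beta^{-1/\theta_P})= O_P(N^{(1-\eta)/\theta_P})$, I expect $g_N'$, $g_N''(0)$, and $g_N'''$ to all be bounded (up to logarithmic and constant factors and the consistency of $\hat\theta_N$) by $\beta^{-1/\theta_P}$ times a polynomial in $\log(1/\beta)$. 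The crucial point is that $\theta_P > 1$ under Assumption \ref{assn:Pareto}(ii), so the exponent $1/\theta_P < 1$, giving $\beta^{-1/\theta_P} = o(N^{(1-\eta)/\theta_P}) = o(N^{1-\eta})$, and since $\log(1/\beta)$ factors are absorbed into any power gap, conditions (ii)--(iv) of Theorem \ref{thm:approxthm} hold with $\epsilon$ any value in $(0,\eta/\theta_P]\cap(0,1/2]$.

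The main obstacle will be controlling the derivatives uniformly on a ball $B$ of fixed radius around $\psi = 0$, since $r(\theta) = \tfrac{\theta}{\theta-1}$ blows up as $\theta \downarrow 1$. To handle this I would shrink $B$ (a fixed radius is permitted, and can be chosen small once $\theta_P > 1$ is fixed) so that $\theta$ stays bounded away from $1$ with $P$-probability tending to $1$, using consistency of $\hat\theta_N$; on such a ball $r$ and all its derivatives are bounded by a constant depending only on $\theta_P$. With this uniform control in place, the derivatives of $g_N$ on $B$ are $O_P(\beta^{-1/\theta_P}(\log(1/\beta))^3)$, which is $o_P(N^{1-\eta/2})$, say, so one can take $\epsilon \in (0,1/2]$ small enough that $o_P(N^{1-\epsilon})$ dominates.

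Finally I would invoke Theorem \ref{thm:approxthm} to conclude that for $\delta = 1-2\epsilon > 0$ there is a maximizer $\psi_N^*$ with $\alpha_{\CQ|\bx_N}(\hat\theta_N+\psi_N^*) = o_P(N^\delta)$, so in particular the supremum is attained strictly inside the admissible region $\{\theta : \alpha_{\CQ|\bx_N}(\theta) \leq N^\delta\}$ and the DR-expected shortfall is regular with $P$-probability tending to $1$ for every $P\in\CQ$. Since this holds for a fixed $\delta > 0$, the DR-expected shortfall is regularizable, which is the claim. The condition $\beta^{-1} = o(N^{1-\eta})$ enters exactly through the requirement that the reward growth $\beta^{-1/\theta_P}$ be dominated by $N^{1-\epsilon}$, mirroring the role of $\beta = o(N^{1-\eta})$ in the Gaussian example.
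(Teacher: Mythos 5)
Your proposal is correct and follows essentially the same route as the paper: recentre the Pareto expected shortfall $\tfrac{\theta}{\theta-1}\beta^{-1/\theta}$ at the MLE, bound its first three derivatives by $\beta^{-1/\theta}$ times powers of $\log(1/\beta)$, use consistency of $\hat\theta_N$ (so that $\theta$ stays above $1$ on a fixed ball) to reduce the hypotheses of Theorem \ref{thm:approxthm} to $\beta^{-1}=o(N^{1-\eta})$, and conclude regularizability. The only cosmetic difference is that you anchor the growth rate at the true parameter $\theta_P$ while the paper phrases it through the random condition $\beta^{-1}=o_P(N^{\hat\theta-\eta})$ before passing to the deterministic bound; these are equivalent given consistency, though you should pick $\epsilon$ (hence $\delta=1-2\epsilon$) uniformly in $P$, e.g.\ $\epsilon=\min(\eta,1/2)$, rather than the $P$-dependent choice $\epsilon\leq\eta/\theta_P$, to match the definition of regularizability exactly.
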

\begin{proof}
 As the MLE is consistent, we know that for large $N$, $\hat\theta>1$ with $P$-probability approaching $1$, for every $P\in\CQ$. In what follows, we assume this is the case.

The quantity to be (locally) maximized when calculating \eqref{eq:truncoptim} is 
\[\frac{\theta}{\theta-1} \beta^{-1/\theta} - \frac{N}{k}\Big(\log(\hat \theta/\theta) -1+ \theta/\hat \theta\Big).\]
To apply Theorem \ref{thm:approxthm}, as we are interested in small values of $\beta$, we calculate the derivatives
\begin{align*}
 \frac{d}{d\theta}\Big(\frac{\theta}{\theta-1} \beta^{-1/\theta}\Big)&=\beta^{-1/\theta}\Big(\frac{\log\beta}{\theta(\theta-1)} -\frac{1}{(\theta-1)^2}\Big),\\
 \frac{d^2}{d\theta^2}\Big(\frac{\theta}{\theta-1} \beta^{-1/\theta}\Big)&=\beta^{-1/\theta}\Big(\frac{2}{(\theta-1)^3} -\frac{2\log\beta}{\theta(\theta-1)^2} +\frac{(\log\beta)^2}{\theta^3(\theta-1)}\Big),\\
 \frac{d^3}{d\theta^3}\Big(\frac{\theta}{\theta-1} \beta^{-1/\theta}\Big)&=O(\beta^{-1/\theta} (\log \beta)^3) \quad \text{uniformly near }\theta> 1.
\end{align*}
For $N$ large, uniformly in a neighbourhood of $\hat\theta>1$, taking 
\[g_N(\psi) = \frac{\hat\theta+\psi}{\hat\theta+\psi-1} \beta^{-1/(\hat\theta+\psi)}- \frac{\hat\theta}{\hat\theta-1} \beta^{-1/\hat\theta}\]
the requirements of Theorem \ref{thm:approxthm} are guaranteed by the simple condition
\[\frac{1}\beta = o_P(N^{\hat\theta-\eta})\qquad \text{for some }\eta>0, \text{ for all }P\in\CQ.\]
As $\hat\theta>1$ with $P$-probability approaching $1$ for $P\in \CQ$ (by consistency of the MLE), we obtain the deterministic bound desired.
\end{proof}

\begin{remark}
 This result places a qualitative bound on the extremity of the expected shortfall that can be reliably estimated, when we measure uncertainty through the DR-expected shortfall with $\gamma=1$. Given $N$ observations, for Pareto distributed models assuming only integrability, we cannot generally claim to estimate expected shortfalls for probabilities below $c/N$, where $c$ is some constant. In particular, estimating extreme tails from small numbers of observations is shown to be unreliable.
\end{remark}
\begin{corollary}
 Under Assumption \ref{assn:Pareto}(ii), in the case $\gamma<\infty$, the DR-expected shortfall with tail probability $\beta$ is regularizable whenever $\beta^{-1} = o_P(N^{1/2-\eta})$, in particular when $\beta^{-1}= o(N^{1/2-\eta})$, for some $\eta>0$.
\end{corollary}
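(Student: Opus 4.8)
The plan is to reduce to the $\gamma=1$ case already established, but run at the critical truncation level $\delta=0$, and then transfer to all finite $\gamma$ via Lemma \ref{lem:delta0reg}. This is the exact analogue, in the Pareto expected-shortfall setting, of the normal example in which $\beta=o(N^{1/2})$ forces $\delta=1-2\epsilon=0$ and the lemma completes the argument.

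First I would reuse the objective and derivative computations from the proof of the preceding theorem, taking $g_N(\psi)$ to be the centred expected-shortfall term $\frac{\hat\theta+\psi}{\hat\theta+\psi-1}\beta^{-1/(\hat\theta+\psi)} - \frac{\hat\theta}{\hat\theta-1}\beta^{-1/\hat\theta}$, which already satisfies $g_N(0)=0$. The decisive change is the choice $\epsilon=1/2$ in Theorem \ref{thm:approxthm}, so that $\delta=1-2\epsilon=0$ and the binding hypothesis (ii) becomes $\|g_N'(\psi)\|=o_P(N^{1/2})$ uniformly on the ball $B$. Since $g_N'$, $g_N''$ and $g_N'''$ are all of the form $\beta^{-1/\theta}$ times a fixed polynomial in $\log\beta$, conditions (iii) and (iv) hold with room to spare once (ii) is secured, so the entire burden falls on (ii).

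Second, I would verify (ii) from the hypothesis $\beta^{-1}=o_P(N^{1/2-\eta})$. By consistency of the MLE and $\theta_P>1$ under Assumption \ref{assn:Pareto}(ii), for large $N$ the ball $B$ stays bounded away from $\theta=1$, so $1/\theta<1$ and hence $\beta^{-1/\theta}\leq\beta^{-1}$ for small $\beta$; equivalently, the requirement $\beta^{-1/\hat\theta}=o_P(N^{1/2-\eta'})$ follows from $\beta^{-1}=o_P(N^{\hat\theta(1/2-\eta')})$, which is guaranteed by $\beta^{-1}=o_P(N^{1/2-\eta})$ since $\hat\theta\to\theta_P>1$ (and the spare exponent $\eta$ absorbs the $|\log\beta|=O(\log N)$ factors). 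In particular the deterministic condition $\beta^{-1}=o(N^{1/2-\eta})$ suffices.

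Finally, Theorem \ref{thm:approxthm} with $\delta=0$ yields a maximizer $\psi^*_N$ on $\{\alpha_{\CQ|\bx_N}(\hat\theta+\psi)\leq 1\}$ satisfying $\alpha_{\CQ|\bx_N}(\hat\theta+\psi^*_N)=o_P(1)$, so with $P$-probability approaching $1$ this maximizer lies in $\{\alpha_{\CQ|\bx_N}<1\}$. This is exactly the hypothesis of Lemma \ref{lem:delta0reg} (case $\gamma=1$, $\delta=0\geq 0$), which then produces, for every $\gamma\geq1$, a maximizer $Q^*_\gamma$ with $\alpha_{\CQ|\bx_N}(Q^*_\gamma)<1\leq N^\delta$, i.e.\ a regular DR-expected shortfall, using the same $\delta$. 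As this holds with $P$-probability tending to $1$ for every $P\in\CQ$, regularizability follows for all $\gamma<\infty$. The main obstacle is step two: one must keep the random exponent $1/\theta$ and the $\log\beta$ factors under uniform control across the neighbourhood of $\hat\theta$, and it is precisely this tightening --- from $o(N^{1-\eta})$ in the $\gamma=1$ theorem down to $o(N^{1/2-\eta})$ --- that buys the smaller truncation level $\delta=0$ needed to invoke Lemma \ref{lem:delta0reg}.
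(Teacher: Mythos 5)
Your proposal is correct and takes essentially the same route as the paper's own proof: use the consistency of the MLE ($\hat\theta>1$) to upgrade $\beta^{-1}=o_P(N^{1/2-\eta})$ to $\beta^{-1}=o_P(N^{\hat\theta/2-\eta})$, apply Theorem \ref{thm:approxthm} with $\epsilon=1/2$ to get truncation level $\delta=0$ and a maximizer with $\alpha_{\CQ|\bx_N}=o_P(1)$, then invoke Lemma \ref{lem:delta0reg} to cover all $\gamma<\infty$. The only difference is that you make explicit the uniform control of $\beta^{-1/\theta}$ and the $\log\beta$ factors on the ball, which the paper leaves implicit.
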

\begin{proof}
By assumption, we know that $\beta^{-1}=o_P(N^{\hat \theta/2-\eta})$, we can apply Theorem \ref{thm:approxthm} with $\epsilon = 1/2$. This gives us a regularization parameter $\delta=0$, and from Lemma \ref{lem:delta0reg} we observe regularity for all $\gamma<\infty$.
\end{proof}

\begin{remark}
 In practice, this is still a very optimistic requirement for reliable estimation, as we have assumed that our simple Pareto model is correct (and no further parameters need to be estimated). In this sense,  these results give `best-case' bounds on how far into the tail we can look before losing reliability of expected shortfall estimation using a Pareto model. 
\end{remark}

\subsection{Value at Risk}
The Value at Risk (with tail probability $\beta$) is not a convex risk measure, so it is not generally true that $\CE_{\CQ|\bx_N}^{k, \gamma}\circ \mathrm{VaR}_\beta$ is a convex expectation. Nevertheless,  we can calculate
\[\mathrm{VaR}_\beta(X) = F_\theta^{-1}(1-\beta)= \beta^{-1/\theta}.\]
Given this is well defined (and finite) for every distribution, we shall proceed under Assumption \ref{assn:Pareto}(i).
\begin{lemma}
 Without truncation, under Assumption \ref{assn:Pareto}(i), the DR-value at risk is infinite for all $N$, and all probability levels $\beta<1$, that is 
\[(\CE^{k,\gamma}_{\CQ|\bx_N}\circ \mathrm{VaR}_\beta)(X) = \infty\]
for every set of observations $\bx_N$ and all choices of $k,\gamma<\infty$.
\end{lemma}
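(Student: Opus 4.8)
The plan is to mirror the structure of the preceding expected-shortfall lemma, but to exploit a different direction of degeneracy in the parameter. Since we work under Assumption \ref{assn:Pareto}(i), the admissible set is all $\theta>0$, and the quantity to be maximized is
\[
(\CE^{k,\gamma}_{\CQ|\bx_N}\circ \mathrm{VaR}_\beta)(X) = \sup_{\theta>0}\Big\{ \beta^{-1/\theta} - \Big(\tfrac{1}{k}\alpha_{\CQ|\bx_N}(\theta)\Big)^\gamma\Big\}.
\]
First I would observe where the reward term explodes. As $\theta\to\infty$ we have $\beta^{-1/\theta}\to 1$, so nothing happens there; the interesting limit is $\theta\downarrow 0$. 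Writing $\beta^{-1/\theta}=\exp\big(|\log\beta|/\theta\big)$ and using that $\beta<1$ forces $|\log\beta|>0$, the reward grows \emph{exponentially} in $1/\theta$ as $\theta\downarrow 0$. This is exactly the step where the hypothesis $\beta<1$ is essential.

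Next I would examine the competing penalty. Using the explicit divergence $\alpha_{\CQ|\bx_N}(\theta)=N\big(-\log(\theta/\hat\theta)-1+\theta/\hat\theta\big)$, as $\theta\downarrow 0$ the term $-\log(\theta/\hat\theta)=\log(\hat\theta/\theta)\to\infty$ dominates while $\theta/\hat\theta\to 0$, so $\alpha_{\CQ|\bx_N}(\theta)\sim N\log(1/\theta)$. Hence the subtracted penalty $\big(\tfrac{1}{k}\alpha_{\CQ|\bx_N}(\theta)\big)^\gamma$ grows only like $\big(\tfrac{N}{k}\log(1/\theta)\big)^\gamma$, i.e.\ it is poly-logarithmic in $1/\theta$. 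Setting $t=1/\theta$, the objective along $\theta\downarrow 0$ behaves like $e^{|\log\beta|\,t}-\big(\tfrac{N}{k}\log t+c\big)^\gamma$ for a constant $c$ depending on $\hat\theta$, and the exponential term overwhelms the poly-logarithmic penalty for every finite $\gamma$ and every fixed $N$. Therefore the supremum is $+\infty$, which is the claim.

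The proof is genuinely short, so the only real obstacle is conceptual rather than computational: one must identify that it is $\theta\downarrow 0$ (not $\theta\to\infty$) that drives the blow-up, and then make the growth-rate comparison precise, namely that exponential growth in $1/\theta$ of the reward beats the merely logarithmic growth of the divergence even after raising to the power $\gamma$ and applying the $1/k$ scaling. Since this holds uniformly for every fixed sample $\bx_N$ and every finite $N$ (the dependence on $N$ enters only as a multiplicative constant in the penalty), the conclusion is valid for all observations and all $k,\gamma<\infty$, exactly as stated. I would present this by exhibiting the divergent sequence $\theta_m\downarrow 0$ and bounding the objective below by $e^{|\log\beta|/\theta_m}-\big(\tfrac{N}{k}\alpha_{\CQ|\bx_N}(\theta_m)\big)^\gamma\to\infty$, which suffices to show the supremum is infinite.
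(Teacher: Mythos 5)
Your proof is correct, and it follows the same basic route as the paper: drive $\theta\downarrow 0$, where $\mathrm{VaR}_\beta(X)=\beta^{-1/\theta}\to\infty$, and show the divergence penalty cannot compensate. The difference lies in the key comparison step, and yours is in fact the more careful version. The paper's own proof asserts, in analogy with the expected-shortfall case, that $\alpha_{\CQ|\bx_N}(\theta)\not\to\infty$ as $\theta\to 0$; taken literally this is false. From the explicit formula
\[
\alpha_{\CQ|\bx_N}(\theta) = N\Big(-\log(\theta/\hat\theta) - 1 + \theta/\hat\theta\Big),
\]
the term $-\log(\theta/\hat\theta)=\log(\hat\theta/\theta)$ diverges as $\theta\downarrow 0$, so the penalty does tend to infinity, albeit only logarithmically in $1/\theta$. (In the expected-shortfall lemma the degenerate limit was $\theta\downarrow 1$, where the divergence genuinely stays finite; that part of the analogy does not transfer to the limit $\theta\downarrow 0$.) Your argument handles this correctly: writing $t=1/\theta$, the penalty is of order $\big(\tfrac{N}{k}\log t\big)^\gamma$, poly-logarithmic in $t$, while the reward $\beta^{-1/\theta}=e^{|\log\beta|\,t}$ is exponential in $t$ (here the hypothesis $\beta<1$ enters), so the objective diverges to $+\infty$ along any sequence $\theta_m\downarrow 0$, for every fixed $N$ and all finite $k,\gamma$. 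This rate comparison is exactly what is needed to make the lemma rigorous, and it is the step the paper's one-line proof glosses over. (One trivial slip in your closing display: the lower bound should involve $\big(\tfrac{1}{k}\alpha_{\CQ|\bx_N}(\theta_m)\big)^\gamma$ rather than $\big(\tfrac{N}{k}\alpha_{\CQ|\bx_N}(\theta_m)\big)^\gamma$, since the factor $N$ is already inside $\alpha$; this does not affect the argument, as an extra constant factor leaves the bound poly-logarithmic.)
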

\begin{proof}
 As in the Expected Shortfall case, we observe that $\beta^{-1/\theta} \to \infty$ as $\theta\downarrow 0$, but $\alpha_{\CQ|\bx_N}(\theta)\not\to \infty$  as $\theta\to 0$. The result follows.
\end{proof}

\begin{remark}
 If we assumed Assumption \ref{assn:Pareto}(ii), then for $\beta\to 0$ and finite $N$, we would obtain $(\CE^{k,\gamma}_{\CQ|\bx_N}\circ \mathrm{VaR}_\beta)(X) \approx 1/\beta$, independently of the observed values. Clearly this is not reliable statistically, as it is the assumption of integrability, rather than the observations, which is leading to finiteness of the estimate.
\end{remark}

\begin{theorem}
Under Assumption \ref{assn:Pareto}(i), for all $\gamma<\infty$, the DR-Value at Risk with tail probability $\beta$ is regularizable whenever $\beta^{-1}= O(1)$ (as $N\to\infty$), for some $\eta>0$.
\end{theorem}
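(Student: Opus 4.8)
The plan is to reduce the computation of the DR-Value at Risk to the abstract maximization result, Theorem~\ref{thm:approxthm}, precisely as was done for expected shortfall; the only genuinely new work is the control of the derivatives of the VaR risk assessment in a neighbourhood of the MLE. Since the Pareto family is an exponential family (Assumption~\ref{assn:Pareto}), Lemma~\ref{quadpenbound} applies and the hypotheses that Theorem~\ref{thm:approxthm} imposes on $\alpha_{\CQ|\bx_N}$ hold automatically; by consistency of the MLE, $\hat\theta\to_P\theta_P>0$ for every $P\in\CQ$. Recalling $\mathrm{VaR}_\beta(X)=\beta^{-1/\theta}$ and centering at the MLE, I would set
\[g_N(\psi)=\beta^{-1/(\hat\theta+\psi)}-\beta^{-1/\hat\theta},\qquad \psi=\theta-\hat\theta,\]
so that condition~(i), $g_N(0)=0$, holds by construction and the ($\gamma=1$) truncated DR-VaR is exactly the maximization of $f_N(\psi)=g_N(\psi)-\frac{1}{k}\alpha_{\CQ|\bx_N}(\hat\theta+\psi)$ analysed by the theorem. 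Note that VaR is not convex, so Proposition~\ref{expconvolution} is unavailable; but Theorem~\ref{thm:approxthm} imposes no convexity, so this causes no difficulty.

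The content of the argument lies in verifying (ii)--(iv). Writing $c=\log(1/\beta)>0$ and $u=\hat\theta+\psi$, each derivative of $\beta^{-1/u}=e^{c/u}$ is $e^{c/u}$ times a polynomial in $c$ and $1/u$; for example
\[g_N'(\psi)=-\frac{c}{u^2}e^{c/u},\qquad g_N''(\psi)=\Big(\frac{2c}{u^3}+\frac{c^2}{u^4}\Big)e^{c/u},\]
and $g_N'''$ is of the same type. I would work on a ball $B$ of sufficiently small constant (in $N$) radius that, with $P$-probability approaching $1$, $u$ stays bounded away from $0$ throughout $B$. The decisive observation is that under the hypothesis $\beta^{-1}=O(1)$ we have $c=O(1)$ and $e^{c/u}=O(1)$ uniformly on $B$, whence $g_N$ and its first three derivatives are all $O_P(1)$ there. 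Since $O_P(1)=o_P(N^{1-\epsilon})=o_P(N)=O_P(N)$, conditions (ii)--(iv) hold for every $\epsilon\in\,]0,1/2]$; taking $\epsilon=1/2$ gives the regularization parameter $\delta=1-2\epsilon=0$, and Theorem~\ref{thm:approxthm} produces a maximizer $\psi_N^*$ of $f_N$ on $\{\psi:\alpha_{\CQ|\bx_N}(\hat\theta+\psi)\le 1\}$ with $\alpha_{\CQ|\bx_N}(\hat\theta+\psi_N^*)=o_P(1)$, hence strictly below $1$ with probability tending to one. This is regularity in the case $\gamma=1$.

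To reach general $\gamma<\infty$ I would invoke Lemma~\ref{lem:delta0reg}: the $\gamma=1$ maximizer sits at $\alpha_{\CQ|\bx_N}<1$ under the truncation $\delta=0$, so the same truncation yields, for every $\gamma\ge1$, a maximizer at $\alpha_{\CQ|\bx_N}<1<N^0$, which is precisely regularity. As this holds with $P$-probability approaching $1$ for every $P\in\CQ$, the DR-Value at Risk is regularizable, as claimed.

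The step carrying all the content is the uniform bound $e^{c/u}=O_P(1)$, and it is here that the hypothesis $\beta^{-1}=O(1)$ is indispensable; this also lays bare the contrast with the expected-shortfall result. Were $\beta^{-1}$ allowed to grow like $N^a$, then on $B$ the factor $\beta^{-1/u}=N^{a/u}$ would behave like $N^{a/\theta_P}$ while $c^2=(a\log N)^2$ remains polylogarithmic, so condition~(iii), $\|g_N''(0)\|=o_P(N)$, would require $a/\theta_P<1$, i.e. $a<\theta_P$. Under Assumption~\ref{assn:Pareto}(i) the parameter $\theta_P$ ranges over all of $(0,\infty)$, with infimum $0$; demanding $a<\theta_P$ uniformly over every $P\in\CQ$ forces $a\le0$, i.e. $\beta^{-1}=O(1)$. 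I would flag this calculation as the qualitative reason that, without integrability, no tail probability $\beta$ that shrinks with the sample size can be reliably estimated.
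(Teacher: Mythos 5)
Your proposal is correct and follows essentially the same route as the paper's proof: bound the derivatives of $\theta\mapsto\beta^{-1/\theta}$ near the MLE, apply Theorem \ref{thm:approxthm} (with $\epsilon=1/2$, giving $\delta=0$) for the $\gamma=1$ case, pass to general $\gamma<\infty$ via Lemma \ref{lem:delta0reg}, and invoke consistency of the MLE with $\theta_P>0$. The only difference is presentational — the paper first records the intermediate condition $\beta^{-1}=o_P(N^{\hat\theta-\eta})$ before specializing to $\beta^{-1}=O(1)$, whereas you specialize immediately; your closing explanation of why $O(1)$ cannot be weakened under Assumption \ref{assn:Pareto}(i) matches the paper's remark following the theorem.
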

\begin{proof}
 As in the Expected Shortfall case, we see that 
\[\frac{d^n}{d\theta^n}\Big(\beta^{-1/\theta}\Big) = O(\beta^{-1/\theta} ((\log \beta)^n+1)).\]
Applying Theorem \ref{thm:approxthm} leads to the proposed condition for regularizability (in the case $\gamma=1$)
\[\frac{1}\beta = o_P(N^{\hat\theta-\eta})\qquad \text{for some }\eta>0, \text{ for all }P\in\CQ.\]
Again, using Lemma \ref{lem:delta0reg}, for regularizability of the $\gamma>1$ case, we obtain the condition $\beta^{-1} = o_P(N^{\hat\theta/2-\eta})$ for some $\eta>0$. Consistency of the MLE (and the assumption $\theta>0$ for all $P$), shows that this is guaranteed when $\beta^{-1}= O(1)$ as $N\to\infty$.
\end{proof}
\begin{remark}
 The requirement $\beta^{-1}=O(1)$ is quite restrictive, but comes from the fact we are assuming nothing beyond Assumption \ref{assn:Pareto}(i), i.e. that our distribution is well defined. Strengthening Assumption \ref{assn:Pareto}(i) to restrict to $\theta>\tilde\theta$ for some $\tilde\theta>0$, we have regularizability whenever $\beta^{-1} = o(N^{\tilde\theta/2-\eta})$ for some $\eta>0$.
\end{remark}

\subsection{Probability of loss}

The probability of a loss exceeding a level $\beta$ is given by (under Assumption \ref{assn:Pareto}(i)
\[\mathrm{PL}_\beta(X)=P(X>\beta) = 1-F_\theta(\beta)=\beta^{-\theta}.\]
This is not a convex expectation, however we can equivalently express it as $P(X>\beta)=E[I_{X>\beta}]$, and then consider the regularity of 
\[(\CE^{k,\gamma}_{\CQ|\bx_N} \circ \mathrm{PL}_\beta)(X)= \CE^{k,\gamma}_{\CQ|\bx_N}(I_{X>\beta}).\] 
\begin{theorem}
 The DR-probability of a loss is regular, for all $N$, $\gamma$ and $\beta$.
\end{theorem}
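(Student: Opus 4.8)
The plan is to exploit the one feature that distinguishes the probability of loss from the expected shortfall and the Value at Risk: the underlying functional is \emph{bounded}. Writing $\xi = I_{X>\beta}$, we have $0\le \xi\le 1$ and $E_\theta[\xi]=\beta^{-\theta}\in[0,1]$ for every admissible $\theta$, so $(\CE^{k,\gamma}_{\CQ|\bx_N}\circ \mathrm{PL}_\beta)(X)=\CE^{k,\gamma}_{\CQ|\bx_N}(I_{X>\beta})$ is simply the DR-expectation of a bounded variable. The explosion that made ES and VaR delicate — where $\frac{\theta}{\theta-1}\beta^{-1/\theta}$ or $\beta^{-1/\theta}$ diverges as $\theta$ approaches the boundary of $\Theta$ — therefore cannot arise, and I expect the result to hold with no constraint relating $\beta$ and $N$.

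First I would establish that the supremum defining the DR-expectation is finite and attained. For $\gamma<\infty$ the map $\theta\mapsto \beta^{-\theta}-(\alpha_{\CQ|\bx_N}(\theta)/k)^\gamma$ is continuous, bounded above by $1$, and tends to $-\infty$ at both ends of $\Theta$, since by the explicit formula $\alpha_{\CQ|\bx_N}(\theta)=N(-\log(\theta/\hat\theta)-1+\theta/\hat\theta)\to\infty$ while $\beta^{-\theta}$ stays in $[0,1]$. Hence a maximizer $\theta^*$ exists in the interior. For $\gamma=\infty$ the feasible set $\{\theta:\alpha_{\CQ|\bx_N}(\theta)\le k\}$ is a compact interval on which the continuous function $\beta^{-\theta}$ attains its maximum (at the left endpoint).

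Next I would bound the divergence at the maximizer uniformly in $N$ and $\beta$. Comparing the value at $\theta^*$ with the value at the MLE, where $\alpha_{\CQ|\bx_N}=0$, yields $(\alpha_{\CQ|\bx_N}(\theta^*)/k)^\gamma \le \beta^{-\theta^*}-\beta^{-\hat\theta}\le 1$, so $\alpha_{\CQ|\bx_N}(\theta^*)\le k$ for every $\gamma\in[1,\infty]$. This bound is a fixed constant, independent of the sample size and of how far into the tail we look; it is precisely the manifestation, for this problem, of the earlier remark that for a variable bounded by $\|\xi\|_\infty$ only measures of divergence below a fixed level are ever relevant. One can refine this to show $\alpha_{\CQ|\bx_N}(\theta^*)=O_P(N^{-1})$, since $\theta^*\to\hat\theta$ as $N\to\infty$, but the crude bound already suffices.

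Finally, regularity is immediate: since $\alpha_{\CQ|\bx_N}(\theta^*)\le k$ with a bound free of $N$ and $\beta$, the $\delta$-truncation at level $N^\delta$ is never active once $N^\delta>k$, so the maximizer lies strictly inside $\{\theta:\alpha_{\CQ|\bx_N}(\theta)<N^\delta\}$ and the truncated DR-probability of loss is regular — for all sample sizes, all $\gamma$, and all tail levels $\beta$, with no interplay between $\beta$ and $N$ required. I do not anticipate a serious obstacle; the only points needing care are verifying attainment of the supremum (coercivity of the penalty at the boundary of $\Theta$) and treating $\gamma=\infty$ separately, since there the optimum sits on the boundary of the confidence set rather than at an interior stationary point.
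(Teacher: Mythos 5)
Your proposal is correct and takes essentially the same approach as the paper: the paper's entire proof is the one-line observation that $I_{X>\beta}$ is bounded, so (by the earlier remark that for bounded variables only measures with divergence below a fixed constant are relevant) the problem is well behaved without regularization, and your comparison-with-the-MLE bound $\alpha_{\CQ|\bx_N}(\theta^*)\le k$ is exactly that remark made explicit. The additional details you supply --- attainment of the supremum via coercivity of the penalty at the ends of $\Theta$, and the separate treatment of $\gamma=\infty$ --- are sound elaborations rather than a different argument.
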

\begin{proof}
As $I_{X>\beta}$ is bounded, this is well behaved without regularization, no matter what the choice of $\beta$. 
\end{proof}

\begin{remark}
The asymptotic behaviour of the DR-probability of loss (and other bounded random variables) is described by \cite[Section 3.2]{Cohen2016}.
\end{remark}

\subsection{Integrated tail and Cram\'er--Lundberg failure probability}

From an insurance perspective, it is sometimes of interest to look at the integrated tail, which under Assumption \ref{assn:Pareto}(ii) is given by
\[\mathrm{IT}_\beta(X) := E[(X-\beta)^+] = \int_\beta^\infty (1-F_\theta(x))dx = \int_\beta^\infty x^{-\theta}dx = \frac{\beta^{1-\theta}}{\theta-1}.\]
\begin{remark}
For $\beta\geq1,$ this a convex map, but not translation invariant, so the DR-integrated tail, $(\CE_{\CQ|\bx_N}^{k, \gamma}\circ \mathrm{IT}_\beta)(\cdot)$, is not generally a convex expectation.
\end{remark}
As in the previous cases, without truncation, the DR-integrated tail poses some problems.
\begin{lemma}
 Without truncation, under Assumption \ref{assn:Pareto}(ii), the DR-integrated tail is infinite for all $N$, and all $\beta\geq 1$, that is 
\[(\CE^{k,\gamma}_{\CQ|\bx_N}\circ \mathrm{IT}_\beta)(X) = \infty\]
for every set of observations $\bx$ and all choices of $k,\gamma<\infty$.
\end{lemma}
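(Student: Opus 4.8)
The plan is to reuse the ``blow-up at the boundary of $\Theta$ while the divergence stays finite'' mechanism already used to prove the corresponding lemmas for Expected Shortfall and Value at Risk. Since $\gamma<\infty$, I would first write the DR-integrated tail explicitly, substituting the closed forms for $\mathrm{IT}_\beta$ and for $\alpha_{\CQ|\bx_N}$ recorded above:
\[(\CE^{k,\gamma}_{\CQ|\bx_N}\circ \mathrm{IT}_\beta)(X) = \sup_{\theta>1}\Big\{\frac{\beta^{1-\theta}}{\theta-1} - \Big(\tfrac{1}{k}\alpha_{\CQ|\bx_N}(\theta)\Big)^\gamma\Big\}.\]
The entire argument then reduces to testing the supremum along the single edge $\theta\downarrow 1$ of the admissible set $\{\theta>1\}$.

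Next I would analyse the two competing terms separately as $\theta\downarrow 1$. For the risk term, for any fixed $\beta\geq 1$ we have $\beta^{1-\theta}\to 1$ as $\theta\to 1^+$; in particular $\beta^{1-\theta}$ stays bounded away from $0$ on a right-neighbourhood of $1$, while $1/(\theta-1)\to+\infty$, so $\mathrm{IT}_\beta=\beta^{1-\theta}/(\theta-1)\to+\infty$. For the penalty term, the closed form $\alpha_{\CQ|\bx_N}(\theta)=N(-\log(\theta/\hat\theta)-1+\theta/\hat\theta)$ is continuous at $\theta=1$, so it converges to the finite constant $N(\log\hat\theta-1+\hat\theta^{-1})$; hence $(\alpha_{\CQ|\bx_N}(\theta)/k)^\gamma$ remains bounded as $\theta\downarrow 1$. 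Combining these, the bracketed expression tends to $+\infty$ along $\theta\downarrow 1$, so the supremum is $+\infty$ for every $N$, every $\beta\geq 1$, and all $k,\gamma<\infty$.

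I do not expect any real obstacle here, since this is the exact analogue of the ES and VaR lemmas: a heavy-tailed risk assessment that diverges at a boundary of $\Theta$ where the data-driven divergence stays finite. The only point requiring a moment's care is confirming that the factor $\beta^{1-\theta}$ does not itself degenerate to $0$ as $\theta\to 1^+$ (one might naively worry that $\beta>1$ forces the exponent negative and kills the term), but this is immediate because its limit is $1$, so the $1/(\theta-1)$ singularity dominates. The statement therefore follows by the identical mechanism, and motivates the regularization step developed subsequently.
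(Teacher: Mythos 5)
Your proposal is correct and follows exactly the paper's own (very terse) argument: the paper's proof is precisely the ``direct calculation, considering $\theta\downarrow 1$'' you carry out, where $\mathrm{IT}_\beta = \beta^{1-\theta}/(\theta-1)\to\infty$ while the divergence $\alpha_{\CQ|\bx_N}(\theta)$ stays bounded near $\theta=1$, so the supremum is $+\infty$ for all $k,\gamma<\infty$. You have simply spelled out the limits (including the harmless factor $\beta^{1-\theta}\to 1$) that the paper leaves implicit.
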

\begin{proof}
   By direct calculation, considering $\theta\downarrow 1$,
\[(\CE^{k,\gamma}_{\CQ|\bx_N} \circ \mathrm{IT}_\beta)(X) = \sup_{\theta} \Big\{\frac{\beta^{1-\theta}}{\theta-1} - \frac{N}{k}\Big(\log(\hat \theta/\theta) -1+ \theta/\hat \theta\Big)\Big\}=\infty.\]
\end{proof}

The surprising result is that, while we need to truncate to avoid infinite values, the DR-integrated tail is always regularizable.
\begin{theorem}
 Under Assumption \ref{assn:Pareto}(ii), for all $\gamma<\infty$, the DR-integrated tail is regularizable for all choices of $\beta\ge 1$ (with any desired dependence on $N$).
\end{theorem}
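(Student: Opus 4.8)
The plan is to invoke Theorem \ref{thm:approxthm} with the natural choice of perturbation function, exactly as in the expected shortfall and Value at Risk cases, taking
\[g_N(\psi) = \frac{\beta^{1-(\hat\theta+\psi)}}{\hat\theta+\psi-1} - \frac{\beta^{1-\hat\theta}}{\hat\theta-1}.\]
Condition (i) of the theorem holds by construction. The aim is to verify conditions (ii)--(iv) with $\epsilon = 1/2$, which produces truncation parameter $\delta = 1 - 2\epsilon = 0$ and a maximizer with $\alpha_{\CQ|\bx_N}(\hat\theta+\psi^*_N) = o_P(1) < 1$; Lemma \ref{lem:delta0reg} then immediately upgrades regularity from $\gamma = 1$ to all $\gamma < \infty$. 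The striking feature to establish is that, unlike the previous cases, \emph{no} constraint linking $\beta$ and $N$ is needed.

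First I would record the derivative structure. Writing $h(\theta) = \beta^{1-\theta}/(\theta-1)$ and differentiating, each $h^{(n)}(\theta)$ is $\beta^{1-\theta}$ times a finite sum of terms of the form $(\log\beta)^j(\theta-1)^{-m}$, so that uniformly on any ball bounded away from $\theta = 1$ we have $|h^{(n)}(\theta)| = O\big(\beta^{1-\theta}((\log\beta)^n + 1)\big)$. Next I would use consistency of the MLE: since $\hat\theta \to_P \theta_P > 1$ under Assumption \ref{assn:Pareto}(ii), I can fix the constant-radius ball $B$ around $\psi = 0$ small enough that, with $P$-probability approaching $1$, one has $\theta = \hat\theta + \psi > 1 + c_0$ for all $\psi \in B$ and some constant $c_0 > 0$ depending on $P$. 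On this event, because $\beta \geq 1$, we get the decay $\beta^{1-\theta} = \beta^{-(\theta-1)} \leq \beta^{-c_0}$, and hence $|g_N^{(n)}(\psi)| = O\big(\beta^{-c_0}((\log\beta)^n + 1)\big)$ uniformly on $B$.

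The key observation closing the argument is that, for any fixed $c_0 > 0$ and integer $n$, the map $\beta \mapsto \beta^{-c_0}((\log\beta)^n + 1)$ is bounded on $[1,\infty)$: the exponential decay $\beta^{-c_0}$ dominates the polynomial growth in $\log\beta$. Consequently $g_N'$, $g_N''$ and $g_N'''$ are all $O_P(1)$, uniformly over every $\beta \geq 1$ and with no reference to how $\beta$ may depend on $N$. Since $O_P(1)$ is trivially $o_P(N^{1/2})$, $o_P(N)$ and $O_P(N)$, conditions (ii)--(iv) of Theorem \ref{thm:approxthm} hold with $\epsilon = 1/2$ for arbitrary $\beta$, and regularizability follows. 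The only genuinely delicate step is keeping $\theta$ bounded away from $1$ on $B$, which is exactly where MLE consistency enters; I expect this to be the main obstacle, since everything else reduces to the elementary boundedness of $\beta^{-c_0}(\log\beta)^n$. The contrast with the earlier sections is worth emphasising in the write-up: there the risk assessment diverges as $\theta$ approaches the boundary, forcing a $\beta$-versus-$N$ trade-off, whereas here the restriction $\beta \geq 1$ makes the integrated tail uniformly flat near the MLE, so no such trade-off is needed.
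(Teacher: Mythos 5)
Your proposal is correct and takes essentially the same route as the paper's own proof: the same derivative bound $O\big(\beta^{1-\theta}((\log\beta)^n+1)\big)$, the same observation that for $\beta\geq 1$ this is bounded (indeed vanishing in $\beta$) once consistency of the MLE keeps $\theta$ bounded away from $1$, and the same application of Theorem \ref{thm:approxthm} with $\epsilon=1/2$ together with Lemma \ref{lem:delta0reg} to cover all $\gamma<\infty$. The paper states this argument much more tersely; your write-up simply makes the choice of $g_N$, the role of MLE consistency, and the $\gamma<\infty$ upgrade explicit.
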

\begin{proof}
 For all $P\in\CQ$, as in the earlier settings, we calculate
\[\frac{d^n}{d\theta^n}\Big(\frac{\beta^{1-\theta}}{\theta-1}\Big)=O\big(\beta^{1-\theta}((\log\beta)^n+1)\big).\]
However, for $\beta\geq 1$, $\theta>1$, the right hand side is $o(1)$ with respect to $\beta$, in particular (by consistency of the MLE) it is $o_P(1)$ for every $P\in \CQ$. Therefore, the integrated tail is regularizable with no restriction on the value of $\beta$, for every value of $\gamma<\infty$.
\end{proof}

A closely related quantity is the related failure probability under a Cram\'er--Lundberg model, given by (under Assumption \ref{assn:Pareto}(ii))
\[\mathrm{CL}_\beta(X) = \frac{\mathrm{IT}_\beta}{E[X]}  = \frac{\beta^{1-\theta}}{\theta}.\]
\begin{theorem}
 For all choices of $\beta\geq 1$, under Assumption \ref{assn:Pareto}(ii), the DR-Cram\'er--Lundberg failure probability is regularizable.
\end{theorem}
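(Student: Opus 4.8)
The plan is to mirror the proof just given for the DR-integrated tail, since $\mathrm{CL}_\beta(X) = \beta^{1-\theta}/\theta$ differs from $\mathrm{IT}_\beta(X) = \beta^{1-\theta}/(\theta-1)$ only in replacing the factor $(\theta-1)^{-1}$ by $\theta^{-1}$. The key observation is that, near a consistent MLE $\hat\theta > 1$ (which holds with $P$-probability approaching $1$ under Assumption \ref{assn:Pareto}(ii)), both $\theta^{-1}$ and its derivatives are smooth and bounded, so the analytic structure governing the growth in $\beta$ is identical. I would therefore set up the function
\[
g_N(\psi) = \frac{\beta^{1-(\hat\theta+\psi)}}{\hat\theta+\psi} - \frac{\beta^{1-\hat\theta}}{\hat\theta}
\]
and aim to verify the hypotheses of Theorem \ref{thm:approxthm}, concluding regularizability for all $\gamma<\infty$.

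The central computation is to bound the derivatives of $\beta^{1-\theta}/\theta$ in $\theta$. As in the integrated-tail proof, differentiating introduces factors of $\log\beta$ from the exponential term $\beta^{1-\theta}=e^{(1-\theta)\log\beta}$, together with polynomially-decaying factors of $\theta^{-1}$; thus one obtains
\[
\frac{d^n}{d\theta^n}\Big(\frac{\beta^{1-\theta}}{\theta}\Big) = O\big(\beta^{1-\theta}((\log\beta)^n + 1)\big)
\]
uniformly on a fixed ball around $\hat\theta>1$. The crucial point is the prefactor $\beta^{1-\theta}$: for $\beta\geq 1$ and $\theta>1$ we have $1-\theta<0$, so $\beta^{1-\theta}\leq 1$ and in fact $\beta^{1-\theta}(\log\beta)^n \to 0$ as $\beta\to\infty$ for any fixed $n$. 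Hence each derivative of $g_N$ is $o(1)$ with respect to $\beta$, and by consistency of the MLE it is $o_P(1)$ for every $P\in\CQ$. This makes the smoothness conditions (ii)--(iv) of Theorem \ref{thm:approxthm} trivially satisfiable (the growth in $g_N$ is negligible compared to the $O(N)$ and $o(N)$ scales required), with any $\epsilon\in\,]0,1/2]$; taking $\epsilon=1/2$ yields $\delta=0$.

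Having established the hypotheses, Theorem \ref{thm:approxthm} provides a maximizer with $\alpha_{\CQ|\bx_N} = o_P(N^\delta)=o_P(1)$ in the case $\gamma=1$, and Lemma \ref{lem:delta0reg} then transfers this to all $\gamma<\infty$, giving regularizability with no restriction on $\beta\geq 1$. I do not anticipate a serious obstacle here: the only mild point requiring care is confirming that the replacement of $(\theta-1)^{-1}$ by the \emph{better-behaved} factor $\theta^{-1}$ cannot worsen any bound — and indeed $\theta^{-1}$ is bounded and smooth on a neighbourhood of $\hat\theta>1$, whereas $(\theta-1)^{-1}$ already sufficed in the integrated-tail case despite its singularity at $\theta=1$. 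Since we stay uniformly away from $\theta=1$ near a consistent MLE, the estimate follows \emph{a fortiori}, and the result is proven exactly as for the integrated tail.
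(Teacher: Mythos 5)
Your proposal is correct and takes essentially the same approach as the paper: the paper's (much terser) proof likewise rests on the observation that $\beta^{1-\theta}/\theta$ and its $\theta$-derivatives are uniformly bounded for $\theta>1$, $\beta\geq 1$, and then appeals to consistency of the MLE and Theorem \ref{thm:approxthm}. Your explicit verification of the derivative bounds (mirroring the integrated-tail argument) and your use of Lemma \ref{lem:delta0reg} with $\epsilon=1/2$, $\delta=0$ to cover all $\gamma<\infty$ simply spell out steps the paper leaves implicit.
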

\begin{proof}
 We calculate
\[(\CE^{k,\gamma}_{\CQ|\bx_N} \circ \mathrm{CL}_\beta)(X) = \sup_{\theta} \Big\{\frac{\beta^{1-\theta}}{\theta} - \frac{N}{k}\Big(\log(\hat \theta/\theta) -1+ \theta/\hat \theta\Big)\Big\}.\]
For $\beta\geq 1$, we know $\frac{\beta^{1-\theta}}{\theta}$ is (uniformly) bounded for all $\theta>1$. Therefore, from the consistency of the MLE and Theorem \ref{thm:approxthm}, $(\CE^{k,\gamma}_{\CQ|\bx_N} \circ \mathrm{CL}_\beta)(X)$ is regularizable.
\end{proof}

\subsection{Distortion risk}
As a final example, we consider a distortion based nonlinear expectation. This is given by taking a convex increasing bijective map $\lambda:[0,1]\to[0,1]$, then calculating the expectation with the transformed cdf
$F_\lambda(x) = \lambda(F(x))$. As $\lambda$ is convex, it is differentiable almost everywhere, and we can calculate the transformed density under Assumption \ref{assn:Pareto},
\[f_\lambda(x) = \lambda'(F(x)) f(x) = \lambda'(1-x^{-\theta})\theta x^{-(1+\theta)}.\]
so the distortion risk is given by 
\begin{equation}\label{eq:distortion}\mathrm{D}_\lambda(X) = \int_{1}^\infty \lambda'(1-x^{-\theta})\theta x^{-\theta}dx.\end{equation}
\begin{lemma}
 The distortion risk is finite whenever there exists $\zeta>1/\theta$ such that
\[\lambda'(y) = O((1-y)^{-1+\zeta}) \quad \text{ as }y\to 1.\]
\end{lemma}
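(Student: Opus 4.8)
The plan is to show that the only possible source of divergence in the integral \eqref{eq:distortion} is at the upper endpoint $x\to\infty$, and that the hypothesis on $\lambda'$ is calibrated precisely to control it. I would split the integral as $\int_1^\infty = \int_1^M + \int_M^\infty$ for a fixed finite $M>1$ and treat the two pieces separately.

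For the lower piece $\int_1^M$, the key observation is that $\lambda$ is convex, so $\lambda'$ is nondecreasing and hence bounded on $[0,1-M^{-\theta}]$. As $x$ ranges over $[1,M]$, the argument $1-x^{-\theta}$ stays in this compact subinterval of $[0,1)$, so $\lambda'(1-x^{-\theta})$ is bounded there; since $\theta x^{-\theta}$ is also bounded on $[1,M]$, the integrand is bounded and this contribution is finite. (Only finitely many non-differentiability points of $\lambda$ are relevant, and they carry no mass.)

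For the upper piece I would carry out the exponent bookkeeping that lies at the heart of the result. As $x\to\infty$ we have $1-x^{-\theta}\to 1$ with $1-(1-x^{-\theta})=x^{-\theta}$, so the hypothesis $\lambda'(y)=O((1-y)^{-1+\zeta})$ gives
\[\lambda'(1-x^{-\theta}) = O\big((x^{-\theta})^{-1+\zeta}\big) = O\big(x^{\theta(1-\zeta)}\big).\]
Multiplying by the remaining factor $\theta x^{-\theta}$ shows the integrand is $O\big(x^{\theta(1-\zeta)-\theta}\big) = O\big(x^{-\theta\zeta}\big)$. Since the hypothesis $\zeta>1/\theta$ is exactly equivalent to $\theta\zeta>1$, the tail integral $\int_M^\infty x^{-\theta\zeta}\,dx$ converges, and hence so does the integral of the integrand over $[M,\infty)$.

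Combining the two finite pieces gives $\mathrm{D}_\lambda(X)<\infty$, as claimed. I do not expect a serious obstacle: the argument is essentially a tail comparison, and the only point requiring care is the exponent arithmetic under the substitution $y=1-x^{-\theta}$, where one must verify that the $O(\cdot)$ bound for $\lambda'$ near $y=1$ transfers correctly through the change of variable to yield the power $x^{-\theta\zeta}$. The fact that the threshold $\zeta=1/\theta$ produces exactly the non-integrable exponent $x^{-1}$ confirms that the stated condition is the sharp boundary for this comparison.
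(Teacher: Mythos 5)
Your proof is correct and takes essentially the same route as the paper's: a comparison (domination) argument showing the integrand in \eqref{eq:distortion} is bounded by an integrable power of $x$, with the hypothesis $\zeta>1/\theta$ supplying exactly the exponent $\theta\zeta>1$ needed for tail integrability. The paper compresses this into one line (comparing with an integrable power and citing dominated convergence), whereas you make the exponent arithmetic explicit --- obtaining the bound $O(x^{-\theta\zeta})$ at infinity --- and dispose of the region near $x=1$ separately via convexity of $\lambda$; this is a more careful rendering of the same argument, not a different one.
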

\begin{proof}
In order for $\mathrm{D}_\lambda$ to be finite, as $\theta>0$, for large $x$ we require $\lambda'(1-x^{-\theta})$ not to be too large, else we lose integrability in \eqref{eq:distortion}. As we know $x^{-(1+\zeta)}$ is integrable on $[1,\infty[$, the result follows by dominated convergence.
\end{proof}
As $\lambda$ is increasing and convex, for any $y<1$, we know that $\{\lambda'(x)\}_{x<y}$ is bounded, so it is the behaviour of $\lambda'$ near $1$ which is of interest. For this reason, we will focus our attention on the following example.
\begin{definition}
 We say $\lambda$ is the `minmaxvar' transform (with parameter $\beta \in (0,1]$) if $\lambda(x) = \lambda_\beta(x):= 1-(1-x)^\beta$.
\end{definition}
This case is interesting from our perspective, as it describes the critical growth of $\lambda$ near the boundary $x=1$. Other classic examples, for example the Wang transform $\lambda(x) = \Phi(\Phi^{-1}(x)-\beta)$, where $\Phi$ is the normal cdf, are also of interest in some settings, but do not have this critical growth. This is closely related to the `minmaxvar' risk measure considered by Cherny and Madan \cite{Cherny2009}.

\begin{lemma}
Without truncation, under Assumption \ref{assn:Pareto}(ii),  the DR-minmaxvar risk is infinite for all $N$, and all $\beta\in(0,1)$, that is 
\[(\CE^{k,\gamma}_{\CQ|\bx_N}\circ \mathrm{D}_{\lambda_\beta})(X) = \infty\]
for every set of observations $\bx_N$ and all choices of $k,\gamma<\infty$.
\end{lemma}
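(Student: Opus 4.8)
The plan is to reuse the divergence-explosion argument that established the corresponding lemmas for expected shortfall and value at risk: I would exhibit admissible Pareto models whose minmaxvar risk diverges while their divergence $\alpha_{\CQ|\bx_N}$ stays finite, so that the supremum defining the DR-risk is forced to be $+\infty$.

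First I would compute the distortion integral explicitly. For the minmaxvar transform $\lambda_\beta(x) = 1-(1-x)^\beta$ we have $\lambda_\beta'(y) = \beta(1-y)^{\beta-1}$, and under Assumption \ref{assn:Pareto} we have $1-F_\theta(x) = x^{-\theta}$, so $\lambda_\beta'(1-x^{-\theta}) = \beta x^{\theta(1-\beta)}$. Substituting into \eqref{eq:distortion} and collecting exponents gives
\[\mathrm{D}_{\lambda_\beta}(X) = \int_1^\infty \beta x^{\theta(1-\beta)}\,\theta x^{-\theta}\,dx = \beta\theta\int_1^\infty x^{-\theta\beta}\,dx.\]

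Second, I would read off the integrability threshold from this expression. The integral is finite precisely when $\theta\beta > 1$, in which case $\mathrm{D}_{\lambda_\beta}(X) = \beta\theta/(\theta\beta-1)$, which increases without bound as $\theta \downarrow 1/\beta$; for $\theta \in (1,1/\beta]$ the integral already diverges, so $\mathrm{D}_{\lambda_\beta}(X) = +\infty$ there. The key structural point is that $\beta \in (0,1)$ forces $1/\beta > 1$, so the critical value $\theta = 1/\beta$ lies strictly inside the admissible parameter region $\{\theta > 1\}$ of Assumption \ref{assn:Pareto}(ii). I would then conclude exactly as in the earlier lemmas: because $\alpha_{\CQ|\bx_N}(\theta) = N(-\log(\theta/\hat\theta)-1+\theta/\hat\theta)$ is a finite continuous function of $\theta$ on any compact subinterval of $(0,\infty)$, the penalty $(\alpha_{\CQ|\bx_N}(\theta)/k)^\gamma$ stays bounded as $\theta$ ranges over a neighbourhood of $1/\beta$, for every fixed $k,\gamma < \infty$ and every $N$. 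Choosing any admissible $\theta \in (1,1/\beta]$ (where $\mathrm{D}_{\lambda_\beta}(X)$ is already infinite), or letting $\theta \downarrow 1/\beta$ from above, therefore drives the objective $\mathrm{D}_{\lambda_\beta}(X) - (\alpha_{\CQ|\bx_N}(\theta)/k)^\gamma$ to $+\infty$, so the supremum equals $+\infty$.

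I do not expect a genuine obstacle here: this is a divergence explosion of the same shape as the preceding lemmas. The only steps demanding care are the exponent bookkeeping showing that $\theta\beta > 1$ is the integrability threshold, and the observation that $1/\beta > 1$ for $\beta < 1$ guarantees this threshold is reached while staying inside the integrable regime $\theta > 1$. It is precisely the critical boundary growth of minmaxvar, flagged before the definition, that makes $1/\beta$ the relevant explosion point.
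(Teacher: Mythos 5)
Your proposal is correct and follows essentially the same route as the paper: both compute $\mathrm{D}_{\lambda_\beta}(X) = \beta\theta/(\beta\theta-1)$ for $\theta\beta>1$ (infinite otherwise), and both conclude by noting that since $\beta<1$ puts the critical point $1/\beta>1$ inside the admissible region of Assumption \ref{assn:Pareto}(ii) while $\alpha_{\CQ|\bx_N}(1/\beta)$ remains finite, the supremum defining the DR-risk is $+\infty$. Your write-up merely makes the exponent bookkeeping and the finiteness-of-penalty step more explicit than the paper's terse version.
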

\begin{proof}
We know $\lambda'_\beta(x) = \beta (1-x)^{\beta-1}$, so
\[\mathrm{D}_{\lambda_\beta} = \int_{1}^\infty \lambda'_\beta(1-x^{-\theta})\theta x^{-\theta}dx = \int_{1}^\infty \beta (x^{-\theta})^{\beta-1}\theta x^{-\theta}dx = \frac{\beta\theta}{\beta\theta-1}\]
provided $\beta>1/\theta$, and is otherwise infinite. As $\alpha_{\CQ|\bx}(1/\beta) <\infty$ for all $\beta \in(0,1)$, our DR-minmaxvar risk will be infinite.
\end{proof}

\begin{theorem}
 Under Assumption \ref{assn:Pareto}(ii), the DR-minmaxvar risk is not regularizable for any $\beta<1$. However, under the stronger assumption that we restrict our models to those where $\theta>\tilde \theta$ for some $\tilde\theta>1$, then the DR-minmaxvar risk is regularizable whenever
\[\beta\geq \frac{1}{\tilde \theta} + \frac{1}{|O(N^{\eta})|}.\]
where $\eta=1/4$ in the case $\gamma=1$, and $\eta<1/4$ in the case $\gamma<\infty$.
\end{theorem}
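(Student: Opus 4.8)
The plan is to treat the two claims separately, exploiting throughout the fact that the minmaxvar risk $\mathrm{D}_{\lambda_\beta}(\theta) = \frac{\beta\theta}{\beta\theta-1}$ is finite precisely when $\theta > 1/\beta$ and has a pole at the critical value $\theta = 1/\beta$, with $\mathrm{D}^{(n)}_{\lambda_\beta}(\theta) = (-1)^n n!\,\beta^n (\beta\theta - 1)^{-(n+1)}$.

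For the negative claim (no regularization possible under Assumption \ref{assn:Pareto}(ii)), I would argue that the pole sits inside the model space. If $\beta < 1$ then $1/\beta > 1$, so under Assumption \ref{assn:Pareto}(ii) the family $\CQ$ contains models $P$ with $\theta_P \in (1, 1/\beta)$. For any such $P$, consistency of the MLE gives $\hat\theta_N \to_P \theta_P < 1/\beta$, so with $P$-probability tending to $1$ we have $\hat\theta_N < 1/\beta$ and hence $\mathrm{D}_{\lambda_\beta}(\hat\theta_N) = +\infty$. Since $\alpha_{\CQ|\bx_N}(\hat\theta_N) = 0 \leq N^\delta$ for every $\delta > 0$, the MLE always lies in the admissible set, so the supremum defining the truncated DR-risk is $+\infty$ and cannot be attained at an interior point. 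As this holds for every $\delta$, the problem fails to be regularizable.

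For the positive claim I would apply Theorem \ref{thm:approxthm}, the crucial point being to keep the optimization region bounded away from the pole. Writing $b_N := \beta - 1/\tilde\theta \geq cN^{-\eta}$, the restriction $\theta > \tilde\theta$ together with consistency gives $\hat\theta_N > \tilde\theta$ with probability approaching $1$, so that the distance to the pole obeys $\beta\hat\theta_N - 1 \geq \beta\tilde\theta - 1 = \tilde\theta\, b_N \geq c\tilde\theta N^{-\eta}$ uniformly over the admissible models. Taking $g_N(\psi) = \mathrm{D}_{\lambda_\beta}(\hat\theta_N + \psi) - \mathrm{D}_{\lambda_\beta}(\hat\theta_N)$, each derivative on a neighbourhood that stays bounded away from $1/\beta$ is of order $(\beta\theta-1)^{-(n+1)} = O_P(N^{(n+1)\eta})$. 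The hypotheses of Theorem \ref{thm:approxthm} then read: (iv) the third derivative is $O_P(N^{4\eta})$, which must be $O_P(N)$, forcing $\eta \leq 1/4$; (iii) the second derivative is $O_P(N^{3\eta}) = o_P(N)$, automatic for $\eta \leq 1/4$; and (ii) the first derivative is $O_P(N^{2\eta}) = o_P(N^{1-\epsilon})$, which requires $\epsilon < 1 - 2\eta$.

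The two rates then emerge by balancing these. For $\gamma = 1$, condition (iv) binds at $\eta = 1/4$, while (ii) only needs $\epsilon < 1/2$, so we may take some $\epsilon \in (0,1/2)$ and obtain a strictly positive truncation exponent $\delta = 1 - 2\epsilon$; thus $\eta = 1/4$ suffices, and the conclusion $\alpha_{\CQ|\bx_N}(\hat\theta + \psi^*_N) = o_P(N^\delta)$ is exactly regularity. For general $\gamma < \infty$ I would instead aim for $\delta = 0$ so as to invoke Lemma \ref{lem:delta0reg}; this forces $\epsilon = 1/2$, whence (ii) requires $2\eta < 1/2$, i.e. $\eta < 1/4$. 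The main obstacle is that Theorem \ref{thm:approxthm} is stated for a ball of \emph{constant} radius, whereas here the natural scale is the shrinking distance $b_N \sim N^{-\eta}$ to the moving pole: one must verify that the relevant optimization region genuinely remains bounded away from the singularity (so the derivative estimates above hold uniformly), and that the resulting \emph{growing} — rather than $O(1)$ — derivative bounds still meet the hypotheses of the theorem. It is this check, dominated by the third-derivative condition $\|g_N'''\| = O_P(N)$, that pins down the threshold $\eta = 1/4$.
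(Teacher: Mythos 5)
Your proof follows essentially the same route as the paper's: the non-regularizability comes from the pole of $\mathrm{D}_{\lambda_\beta}$ at $\theta=1/\beta$, which can lie above the MLE (a point of zero penalty) with probability tending to one, and the positive claim is obtained by feeding the derivatives of $\beta\theta/(\beta\theta-1)$, bounded via $\beta\hat\theta-1\geq\beta\tilde\theta-1\gtrsim N^{-\eta}$, into Theorem \ref{thm:approxthm} (with Lemma \ref{lem:delta0reg} handling $\gamma<\infty$ via $\epsilon=1/2$, $\delta=0$), so that the third-derivative condition $O_P(N^{4\eta})=O_P(N)$ pins down $\eta=1/4$. Your closing caveat about the constant-radius ball versus the $N^{-\eta}$-distant pole is a genuine subtlety that the paper's own proof also passes over silently, so flagging it is a refinement rather than a deviation from the published argument.
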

\begin{proof}
For any fixed $\beta<1$, Assumption \ref{assn:Pareto}(ii), is insufficient to guarantee that the MLE $\hat \theta>1/\beta$ (as $\hat\theta$ may be arbitrarily close to $1$). As this is the condition for finiteness of $\mathrm{D}_{\lambda_\beta}$ and the MLE satisfies $\alpha_{\CQ|\bx_N}(\hat\theta)\equiv0$, the non-regularizability follows.

Under our stronger assumption, to determine conditions on $\beta$ such that the DR-minmaxvar risk is regularizable, we proceed as before. We can calculate
\[(\CE^{k,1}_{\CQ|\bx_N} \circ \mathrm{D}_{\lambda_\beta})(X) = \sup_\theta\Big\{\frac{\beta\theta}{\beta\theta-1} - \Big(\frac{N}{k}\Big(\log(\hat \theta/\theta) -1+ \theta/\hat \theta\Big)\Big)^\gamma \Big\}.\]
 We have the derivatives
\begin{align*}
 \frac{d}{d\theta}\Big(\frac{\beta\theta}{\beta\theta-1}\Big)&=\frac{-\beta}{(\beta\theta-1)^2},\\
 \frac{d^2}{d\theta^2}\Big(\frac{\beta\theta}{\beta\theta-1}\Big)&=\frac{2\beta^2}{(\beta \theta-1)^3},\\
 \frac{d^3}{d\theta^3}\Big(\frac{\beta\theta}{\beta\theta-1}\Big)&=\frac{-6\beta^3}{(\beta\theta-1)^4}.
\end{align*}
With $g_N(\psi) = \frac{\beta(\hat\theta+\psi)}{\beta(\hat\theta+\psi)-1} - \frac{\beta\hat\theta}{\beta\hat\theta-1}$, Theorem \ref{thm:approxthm} is satisfied by assuming $(\hat\theta\beta -1)^{-4}=O_P(N)$, or equivalently, 
\[\beta \geq \frac{1}{\hat\theta} + \frac{1}{|O_P(N^{1/4})|}.\]
Using the consistency of the MLE we know $\hat\theta>\tilde\theta$, and the result follows. If we make the further assumption that $\beta=\tilde\theta^{-1}+ 1/|O(N^{\eta})|$ for $\eta<1/4$, we can take $\epsilon=1/2$ in Theorem \ref{thm:approxthm}, to obtain regularizability for all $\gamma<\infty$.
\end{proof}

\begin{remark}
 Many further cases can also be considered, using this general approach. It would be interesting also to apply DR-regularization in a more general estimation problem, where we do not assume simply that we have a Pareto distribution, but must use a generalized extreme value or generalized Pareto model, with the associated estimation difficulties. 
\end{remark}

\section*{Appendix}

 To prove our key approximation result (Theorem \ref{thm:approxthm}) we begin with the following lemma.
 \begin{lemma}\label{abstractapprox}
  Consider a $C^3$ function $f:\bR^N\to \bR$, with negative definite Hessian at zero $H$. Suppose we wish to find a local maximum in a small ball $B_\delta$ of radius $\delta$ around $0$. Let $c_\delta = \sup_{x\in B_\delta}\|f'''(x)\|$. If a local maximum exists in the interior of the ball, then it has position
 \[x^*= - H^{-1} f'(0) + c_\delta \|H^{-1}\|O(\delta^2).\]
  \end{lemma}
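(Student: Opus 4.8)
The plan is to perform a Taylor expansion of $f$ around $0$ and analyze the first-order condition for the local maximum. Since $f$ is $C^3$ and the Hessian $H = f''(0)$ is negative definite (hence invertible), I would first write, for any $x$ in the ball,
\[f'(x) = f'(0) + Hx + R(x),\]
where $R(x)$ is the remainder term arising from the third-order behaviour. The key quantitative input is that $R$ is controlled by the third derivative: by the integral (or Lagrange) form of Taylor's theorem applied to the gradient, each component of $R(x)$ is bounded in terms of $c_\delta = \sup_{x\in B_\delta}\|f'''(x)\|$ and $\|x\|^2$, so that $\|R(x)\| = c_\delta\, O(\delta^2)$ uniformly on $B_\delta$.

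Next, at an interior local maximum $x^*$, the first-order condition $f'(x^*) = 0$ must hold. Substituting into the expansion gives
\[0 = f'(0) + Hx^* + R(x^*),\]
and since $H$ is invertible I would solve for $x^*$ as
\[x^* = -H^{-1}f'(0) - H^{-1}R(x^*).\]
The leading term is exactly $-H^{-1}f'(0)$, and the correction is bounded by $\|H^{-1}\|\,\|R(x^*)\| = c_\delta\|H^{-1}\|\,O(\delta^2)$, since $x^*\in B_\delta$ means $\|x^*\|\le\delta$ and hence the remainder bound applies. This yields precisely the claimed expression
\[x^* = -H^{-1}f'(0) + c_\delta\|H^{-1}\|\,O(\delta^2).\]

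The main subtlety I expect is in making the remainder estimate fully rigorous and uniform: the bound $\|R(x)\| = c_\delta\,O(\delta^2)$ requires care because $R$ is a vector-valued remainder for the gradient, so one must apply Taylor's theorem componentwise (or to the gradient map directly) and track how the operator norm of the third-derivative tensor enters. One should also confirm that the implicit appearance of $x^*$ inside $R(x^*)$ does not cause circularity — but this is harmless, since the bound $\|x^*\|\le\delta$ is all that is needed to control $R(x^*)$, and the leading-order identification of $x^*$ follows regardless. The statement is careful to assume the existence of an interior local maximum, so I would not need to establish existence; I would only need to characterize its location to the stated order, which makes the argument essentially a controlled inversion of the first-order optimality condition.
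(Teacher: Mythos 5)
Your proposal is correct and follows essentially the same argument as the paper: the paper Taylor-expands $f$ to third order and differentiates, which is exactly your expansion $f'(x)=f'(0)+Hx+R(x)$ of the gradient, with the same remainder bound $\|R(x)\|\leq (c_\delta/2)\|x\|^2 = c_\delta O(\delta^2)$, followed by the same inversion of the first-order condition at an interior maximum. Your observation that the implicit $R(x^*)$ causes no circularity (only $\|x^*\|\leq\delta$ is needed) is also the reasoning implicit in the paper's proof.
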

 \begin{proof}
  By Taylor's theorem, we can write the expansion
 \[f(x) = f(0)+ x^\top f'(0) + \frac{1}{2}x^\top H x + R(x)\]
 where $R$ is some remainder term with  $|R(x)|\leq (c_\delta/6)\|x\|^3$ and derivative $\|R'(x)\|\leq (c_\delta/2)\|x\|^2=c_\delta O(\delta^2)$ on our ball. To find a local extremum $x^*$, we differentiate to obtain the vector equation
 \[0= f'(0) + H x^* +  R'(x^*)\]
 which rearranges to give the approximation (which is true for every interior local extremum)
 \[x^* = - H^{-1} f'(0) - H^{-1} R'(x^*).\]
 For an extremum within the ball, we have the desired approximation
 \[x^*= - H^{-1} f'(0) + c_\delta\|H^{-1}\|O(\delta^2).\]
 \end{proof}

We now combine Lemma \ref{abstractapprox} and Lemma \ref{quadpenbound} to give a proof of Theorem \ref{thm:approxthm}, which we repeat here for the ease of the reader.
\begin{theorem}
Consider the maximization of the general function 
\[f_N(\psi) := g_N(\psi) - \frac{1}{k}\alpha_{\CQ|\bx_N}(\hat\theta+\psi)\]
Suppose that $\alpha_{\CQ|\bx_N}$ arises from a setting where the result of Lemma \ref{quadpenbound} can be applied and, for some $\epsilon\in\,]0,1/2]$, for every $P\in\CQ$, for every $N$ the function $g_N$ is $C^3$ and satisfies, for $B$ a ball of constant radius around $\psi=0$,
\begin{enumerate}[(i)]
\item $g_N(0)=0$,
 \item  $\|g_N'(\psi)\|=o_P(N^{1-\epsilon})$ uniformly on $B$,
  \item $\|g_N''(0)\| = o_P(N)$,
  \item $\|g_N'''(\psi)\| = O_P(N)$ uniformly on $B$.
\end{enumerate}
Then there exists a point $\psi^*_N$ such that, for each $P\in\CQ$, with $P$-probability approaching $1$ as $N\to\infty$, we know that $\psi^*_N$ maximizes the value of $f_N$ on the set  $\{\psi:\alpha_{\CQ|\bx_N}(\hat\theta+\psi)\leq N^{1-2\epsilon}\}$ and, furthermore, $\alpha_{\CQ|\bx_N}(\hat\theta+\psi^*_N)= o_P(N^{1-2\epsilon})$.
\end{theorem}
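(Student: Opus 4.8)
The plan is to prove this by a Taylor-expansion argument centred at the MLE, combining the quadratic lower bound on the divergence from Lemma \ref{quadpenbound} with the abstract local-maximum analysis of Lemma \ref{abstractapprox}. The key idea is that, on a ball of constant radius, the objective $f_N$ is dominated by $-\frac{1}{k}\alpha_{\CQ|\bx_N}(\hat\theta+\psi)$, which by Lemma \ref{quadpenbound} grows at least like $\frac{N}{kC}\|\psi\|(1\wedge\|\psi\|)$; meanwhile conditions (i)--(iv) control $g_N$ and its derivatives at strictly smaller orders in $N$. This growth mismatch is what forces the maximizer to be near the MLE and to have small divergence.

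First I would rescale the problem so that the natural scale of the problem becomes explicit. Because the Hessian of the negative log-likelihood is of order $N$ (it equals $N\mathfrak{I}_{\hat\theta}$ up to lower-order terms, and $\mathfrak{I}$ is positive definite by Assumption \ref{assn:expfam}(iv)), while $g_N'$ is only $o_P(N^{1-\epsilon})$, the unconstrained local maximizer $\psi^*_N$ predicted by Lemma \ref{abstractapprox} satisfies $\psi^*_N = -H^{-1}f_N'(0)+(\text{cubic correction})$, where $H=g_N''(0)-\frac1k\alpha''(\hat\theta)$. Using (ii)--(iv) and Lemma \ref{quadpenbound}, I would show $\|\psi^*_N\| = o_P(N^{-\epsilon})$: the leading term $H^{-1}f_N'(0)$ is of order $N^{-1}\cdot o_P(N^{1-\epsilon}) = o_P(N^{-\epsilon})$, and the cubic remainder, controlled by $c_\delta = O_P(N)$ from (iv) and the $O(N)$ contribution from $\alpha'''$, is of strictly smaller order on the relevant ball. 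One must check that $\alpha_{\CQ|\bx_N}$ is indeed $C^3$ with third derivative $O_P(N)$ near $\hat\theta$; this follows from the exponential-family structure, since $\alpha$ is (up to the affine likelihood term) $N$ times the Bregman divergence of the smooth log-partition function $A$.

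Next I would translate the bound $\|\psi^*_N\| = o_P(N^{-\epsilon})$ into the divergence bound. Feeding this into the quadratic \emph{upper} bound on $\alpha$ near $\hat\theta$ (the matching local-quadratic estimate, again from the exponential-family Taylor expansion), I get
\[\alpha_{\CQ|\bx_N}(\hat\theta+\psi^*_N) = O_P\big(N\|\psi^*_N\|^2\big) = O_P\big(N\cdot o_P(N^{-2\epsilon})\big) = o_P(N^{1-2\epsilon}) = o_P(N^\delta).\]
Since this is $o_P(N^\delta)$, for $N$ large the point $\psi^*_N$ lies strictly inside the admissible set $\{\psi:\alpha_{\CQ|\bx_N}(\hat\theta+\psi)\leq N^\delta\}$ with $P$-probability approaching $1$, so the constraint is inactive and the local maximizer is in fact the constrained global maximizer. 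Finally, I would argue that no competing maximizer exists on the boundary: for $\psi$ with $\|\psi\|$ of constant order, Lemma \ref{quadpenbound} gives $-\frac1k\alpha \leq -\frac{N}{kC}\|\psi\|$, which by (i)--(ii) dominates $g_N(\psi)=O_P(N^{1-\epsilon})\|\psi\|$ and drives $f_N$ below $f_N(\psi^*_N)\approx 0$, ruling out maximizers away from the MLE.

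The main obstacle I anticipate is handling the two regimes of $\psi$ simultaneously and uniformly: Lemma \ref{quadpenbound} only gives the \emph{quadratic} lower bound $\|\psi\|(1\wedge\|\psi\|)$ (genuinely quadratic near the MLE, but merely linear far away), so I must carefully split the constant-radius ball into an inner region where the quadratic approximation and Lemma \ref{abstractapprox} apply, and an outer annulus where the linear-in-$\|\psi\|$ lower bound on $\alpha$ suffices to exclude maximizers. Making the inner-region analysis rigorous requires showing the local maximum predicted by Lemma \ref{abstractapprox} genuinely exists in the interior (not just formally solving the first-order condition), which I would do by a degree/continuity argument using the strict negative-definiteness of $H$ together with the established smallness of $\psi^*_N$.
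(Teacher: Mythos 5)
Your proposal is correct and takes essentially the same route as the paper's proof: use the $\frac{N}{C}\|\psi\|(1\wedge\|\psi\|)$ lower bound of Lemma \ref{quadpenbound} against the $o_P(N^{1-\epsilon})\|\psi\|$ growth of $g_N$ to confine any maximizer to a ball of radius $o_P(N^{-\epsilon})$ around the MLE, apply Lemma \ref{abstractapprox} to locate it, and then Taylor-expand $\alpha_{\CQ|\bx_N}$ (the exponential-family/Bregman structure giving $\alpha''' = O_P(N)$) to conclude $\alpha_{\CQ|\bx_N}(\hat\theta+\psi^*_N)=N\psi^{*\top}(\mathfrak{I}_{\hat\theta}+O_P(\|\psi^*\|))\psi^* = o_P(N^{1-2\epsilon})$, so the truncation constraint is inactive. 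The only real difference is cosmetic: where you propose a degree/continuity argument for existence of the interior local maximum, the paper gets it more simply from $f_N(0)=0$ together with $f_N<0$ outside the small ball, which forces the constrained maximizer (existing by continuity on the admissible region) into the interior.
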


\begin{proof}
By Lemma \ref{quadpenbound}, for all $N$ sufficiently large, with arbitrarily high $P$-probability, for $\psi$ in a neighbourhood of zero of fixed radius, there exists a constant $C>0$ (depending on $P$) such that
\[\alpha_{\CQ|\bx_N}(\hat\theta+\psi) > \frac{N}{C}\|\psi\|^2\]
and outside this neighbourhood $\alpha_{\CQ|\bx_N}(\hat\theta+\psi)\geq O(N)$. In all that follows, we restrict our attention to this constant radius ball.

Omitting subscript $N$ for simplicity, from our assumptions on $g$ we know
\[g(\psi) = o_P(N^{1-\epsilon})\|\psi\|\]
Consequently, except possibly on a ball of radius $O_P(N^{-\epsilon})$ around zero, we know that 
\[\alpha_{\CQ|\bx_N}(\hat\theta+\psi)> \frac{N}{C}\|\psi\|^2> o_P(N^{1-\epsilon})\|\psi\| = |g(\psi)|.\]
It follows that, for all $\psi$ outside a ball of radius $O_P(N^{-\epsilon})$, we know that $f(\psi)<0$. As $f(0)=0$, there must exist a local maximum within the ball of radius $O_P(N^{-\epsilon})$.
 
We know that, 
\begin{align*}
f'(0)&=g'(0) = o_P(N^{1-\epsilon})\\
\|f''(0)^{-1}\|&\leq \|( g''(0) - N/C)^{-1}\| = O_P(N^{-1})\\
f'''(\psi) &= g'''(\psi) + \alpha_{\CQ|\bx_N}'''(\hat\theta+\psi) = O_P(N).
\end{align*}
Applying Lemma \ref{abstractapprox}, we know that any local maximum of $f$ within a ball of radius $o_P(N^{-\epsilon/2})$ will be at a point satisfying
 \begin{align*}
  \psi^* &= -f''(0)^{-1} f'(0) + O_P(N) \|f''(0)^{-1}\| o_P(N^{-\epsilon}) = o_P(N^{-\epsilon}).
 \end{align*}
 Therefore, \emph{all} local maxima within the ball of radius  $o_P(N^{-\epsilon/2})$ will be within the ball of radius $o_P(N^{-\epsilon})$.
 
Taking a Taylor approximation of the $C^3$ function $\alpha_{\CQ|\bx_N}$ we see that, within  the ball of radius $o_P(N^{-\epsilon})$, 
\begin{align*}
\alpha_{\CQ|\bx_N}(\hat\theta+\psi) &= N\psi^\top (\mathfrak{I}_{\hat\theta}+O_P(\|\psi\|))\psi = \|\mathfrak{I}_{\hat\theta}\|o_P(N^{1-2\epsilon})  + o_P(N^{1-3\epsilon})\\ &= o_P(N^{1-2\epsilon}) .
\end{align*}

Conversely, outside the ball of radius $o_P(N^{-\epsilon/2})$, we know that $\alpha_{\CQ|\bx_N}(\hat\theta+\psi) > N^{1-\epsilon}/C$ for some $C>0$. Therefore, we can be certain that  a point $\psi^*=o_P(N^{-\epsilon})$ will be the maximizer within the region $\{\psi:\alpha_{\CQ|\bx_N}(\hat\theta+\psi) < N^{1-2\epsilon}\}$, as desired.
\end{proof}

\section*{Acknowledgements}
Research supported by the Oxford--Man Institute for Quantitative Finance and the Oxford--Nie Financial Big Data Laboratory.

\bibliographystyle{plain}  
\bibliography{EVTbib}
\end{document}